\theoremstyle{plain}
\newtheorem{thm}{Theorem}[section]
\newtheorem{lem}[thm]{Lemma}
\newtheorem{cor}[thm]{Corollary}
\theoremstyle{definition}
\theoremstyle{remark}
\newtheorem{rem}{Remark}[section]
\newcommand{\forme}[1]{}
\title{On wreath products of cyclic schemes}
\author[K.~Kim]{Kijung Kim}
\address{Department of Mathematics, POSTECH, Pohang 790-785, Republic of Korea}
\email{kijung@postech.ac.kr}
\date{\today}
\begin{document}
\maketitle

\begin{abstract}
We describe explicitly the algebraic structure of the Terwilliger algebra of wreath products of cyclic schemes.
\end{abstract}

{\footnotesize {\bf Key words:} Association scheme; Terwilliger algebra; Wreath product.}

\section{Introduction}\label{sec:intro}
The Terwilliger algebra was first introduced in \cite{terwilliger} as new tool for commutative association schemes.
In general, this algebra is non-commutative, finite dimensional and semi-simple $\mathbb{C}$-algebra.
In \cite{song} G. Bhattacharyya, S.Y. Song and R. Tanaka began to study the wreath product of one-class association schemes.
It was shown that all irreducible modules except for the primary module of the Terwilliger algebra are one-dimensional.
Moreover, in \cite{rie} R. Tanaka proved that the only class of association schemes coming as the wreath product of one-class association schemes and group schemes of finite abelian groups satisfies this property.
Recently, in \cite{song3} S.Y. Song and B. Xu gave a complete structural description of the Bose-Mesner algebra and Terwilliger algebra for wreath products of one-class association schemes.
In this paper, we describe explicitly the algebraic structure of the Terwilliger algebra of wreath products of cyclic schemes.

The remainder of this paper is organized as follows.
In Section~\ref{sec:pre}, we review the notation and basic results on association schemes and the Terwilliger algebra.
In Section~\ref{sec:main}, we show that $ C_{p_1} \wr C_{p_2} \wr \cdots \wr C_{p_d} $ is triply-regular.
In Section~\ref{sec:full}, we determine the structure of the Terwilliger algebra of $ C_{p_1} \wr C_{p_2} \wr \cdots \wr C_{p_d} $ by central primitive idempotents.

\section{Preliminaries}\label{sec:pre}
In this section, we prepare necessary notation and results about association schemes and their Terwilliger algebras.
For further information, the reader is referred to \cite{bannai}, \cite{bcn}, \cite{godsil} and \cite{terwilliger}.

\subsection{Association schemes}\label{sec:as}

Let $X$ denote a nonempty finite set. Let $Mat_{|X|}(\mathbb{C})$ denote the $\mathbb{C}$-algebra consisting of all matrices whose rows and columns
are indexed by $X$ and whose entries are in $\mathbb{C}$.

Let $R_0, R_1, \dotsc , R_d$ be nonempty subsets of $X \times X$.
Let $A_i$ denote the matrix in $Mat_{|X|}(\mathbb{C})$ with $xy$ entry
\[(A_i)_{xy}  = \left\{
                    \begin{array}{ll}
                     1 & \hbox{if $(x,y) \in R_i$;} \\
                     0 & \hbox{otherwise.}
                    \end{array}
                  \right.\]
It is called \textit{adjacency} matrix of $R_i$.
We denote the transpose of $A_i$ by $A_i ^t$, the identity matrix by $I$ and all-ones matrix by $J$.

We say that $\chi = (X, \{ R_i \}_{ 0 \leq i \leq d} )$ is a \textit{d-class association scheme} of order $|X|$
if the following hold:
\begin{enumerate}
\item[(1)] $A_0 = I$;
\item[(2)] $A_0 + A_1 +\cdots + A_d=J$;
\item[(3)] $A_i ^t = A_{i'}$ for some $0 \leq i' \leq d$;
\item[(4)] For each $0 \leq i,j \leq d$, $A_i A_j = \sum _{h=0} ^d  p_{ij} ^h A_h$ for some nonnegative integers $p_{ij} ^h$.
\end{enumerate}

The constants $p_{ij} ^h$ given in (4) are called the \textit{intersection numbers} of $\chi$.
For each $A_i$ we abbreviate $p _{ii'}^0$ as $n_i$, which is called the \textit{valency} of $A_i$.
We denote $\{ y \in X \mid (x,y) \in R_i \}$ by $R_i (x)$.
Then for each $(x,y) \in R_h$, $p_{ij} ^h=|\{ z \in X \mid (x,z) \in R_i, (z,y) \in R_j \}|=|R_i(x) \cap R_{j'}(y)|$.
We say that $\chi$ is \textit{commutative} if $A_i A_j = A_j A_i$ for $0\leq i,j \leq d$.
In \cite{pamela}, a commutative association scheme $\chi$ ia called \textit{cyclic} if $\{ R_i \}_{ 0 \leq i \leq d}$ forms a cyclic group.
Moreover, if the order of a cyclic scheme $\chi$ is $n$, then we denote $\chi$ by $C_n$.
Next, we recall the concept of the wreath product and use the notation given in \cite{song2}. Let $\chi = (X, \{ R_i \}_{ 0 \leq i \leq d} )$ and $\psi = (Y, \{ S_i \}_{ 0 \leq j \leq e} )$ be association schemes of order $|X|=u$ and $|Y|=v$.
The \textit{wreath product} $\chi \wr \psi$ of $\chi$ and $\psi$ is defined on the set $X \times Y$;
but we take $Y = \{ y_1 , y_2, \dotsc , y_v \}$, and regard $X \times Y$ as the disjoint union of $v$ copies $X_1, X_2, \dotsc , X_v$
of $X$, where $X_j = X \times \{ y_j \}$.
The relations on $X_1 \cup X_2 \cup \cdots \cup X_v$ are defined by the following rules:
\begin{enumerate}
\item[(1)] For any $j$, the relations between the elements of $X_j$ are determined by the
association relations between the first coordinates in $\chi$.
\item[(2)] The relations between the elements that belong to two different sets, say $X_i$ and
$X_j$, are determined by the association relation of the second coordinates $y_i$ and
$y_j$ in $\psi$ and the relation is independent from the first coordinates.
\end{enumerate}
That is, the relations $W_0, W_1, \dotsc , W_{d+e}$ of $\chi \wr \psi$ are defined by

$W_0 = \{ ((x,y),(x,y)) \mid (x,y) \in X \times Y \}$,

$W_k = \{ ((x_1,y), (x_2,y)) \mid (x_1, x_2) \in R_k , y \in Y \}$ for $1 \leq k \leq d$ and

$W_k = \{ ((x_1, y_1),(x_2,y_2)) \mid x_1, x_2 \in X , (y_1, y_2) \in S_{k-d} \}$ for $d+1 \leq k \leq d+e$.

Then it is easy to see that $\chi \wr \psi = ( X \times Y, \{ W_k  \}_{0 \leq k \leq d+e})$ is a $(d+e)$-class association scheme.
Note $\chi \wr \psi$ is commutative if and only if $\chi$ and $\psi$ are.

Let the adjacency matrices of $\chi$ and $\psi$ be $A_0,  A_1 , \dotsc , A_d$ and $C_0, C_1, \dotsc , C_e$, respectively.
Then the adjacency matrices of $\chi \wr \psi$ are given by
\[ C_0 \otimes A_0, C_0 \otimes A_1, \dotsc , C_0 \otimes A_d, C_1 \otimes J_u , \dotsc ,C_e \otimes J_u,\]
where $\otimes$ denotes the Kronecker product, i.e., $A \otimes B := (a_{ij}B)$ of $A=(a_{ij})$ and $B$.

\subsection{Terwilliger algebras}\label{sec:algebra}
Let $\chi = (X, \{ R_i \}_{ 0 \leq i \leq d} )$ be a $d$-class association scheme.
Let $V=\mathbb{C}X = \bigoplus_{x \in X} \mathbb{C} \hat{x}$ denote the vector space over $\mathbb{C}$ consisting of column vectors whose
coordinates are indexed by $X$ and whose entries are in $\mathbb{C}$, where $\hat{x}$ denotes the column vector with $1$ in the $x$-th position and $0$ otherwise.
We observe that $Mat_{|X|}(\mathbb{C})$ acts on $V$ by left multiplication. We call $V$ the \textit{standard module}.
We endow $V$ with the Hermitian form  defined by $\langle u , v \rangle = u^t \bar{v}$ for $u, v \in  V$, where $t$ denotes
transpose and - denotes complex conjugation.

If $\chi$ is commutative, then the adjacency matrices generate a $(d+1)$-dimensional subalgebra $\mathfrak{M} =\langle A_0, A_1, \dotsc, A_d \rangle$, which is called the \textit{Bose-Mesner} algebra.

Fix a vertex $x \in X$. For each $0 \leq i \leq d$, let $E_i ^* = E_i ^*(x)$ denote the diagonal matrix in $Mat_{|X|}(\mathbb{C})$ with $yy$ entry
\[(E_i ^*)_{yy}  = \left\{
                    \begin{array}{ll}
                     1 & \hbox{if $(x,y) \in R_i$;} \\
                     0 & \hbox{otherwise.}
                    \end{array}
                  \right.\]

The $E_i ^*$'s form a basis for a subalgebra $\mathfrak{M}^*=\mathfrak{M}^*(x) =\langle E_0^*, E_1^*, \dotsc, E_d^* \rangle$.
Let $\mathcal{T}=\mathcal{T}(x)$ denote the subalgebra of $Mat_{|X|}(\mathbb{C})$ generated by $\mathfrak{M}$ and $\mathfrak{M}^*$.
We say that $\mathcal{T}$ is the \textit{Terwilliger algebra of $\chi$ with respect to $x$}.
Put $V_i ^* = E_i ^* V$. Then $V=\bigoplus_{i=0} ^d V_i ^*$.
We denote the all-ones column vector in $V$ by $\mathbf{1}$.


Finally, we review three lemmas used in our study.

\begin{lem}\label{lem:inter}{\cite{terwilliger}}
For $0 \leq i,j,h \leq d$, $E_i ^* A_j E_h ^* = 0$ if and only if $p_{ij} ^h = 0$.
\end{lem}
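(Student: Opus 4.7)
My plan is to prove the equivalence by a direct entrywise computation. Since $E_i^*$ and $E_h^*$ are diagonal matrices, the product factors immediately as
\[
(E_i^* A_j E_h^*)_{yz} = (E_i^*)_{yy}\,(A_j)_{yz}\,(E_h^*)_{zz},
\]
and from the definitions of $E_i^*$, $E_h^*$, and $A_j$ this entry equals $1$ precisely when all three conditions $(x,y)\in R_i$, $(y,z)\in R_j$, and $(x,z)\in R_h$ hold, and $0$ otherwise.

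Next I would fix a vertex $z$ with $(x,z)\in R_h$; such a vertex exists because the valency $n_h$ is positive (as $R_h$ is assumed nonempty and the scheme is homogeneous). For this fixed $z$, the set of $y$'s contributing a nonzero entry in column $z$ is exactly $\{y\in X : (x,y)\in R_i,\ (y,z)\in R_j\}$, and by axiom~(4) this set has cardinality $p_{ij}^h$, independently of the particular representative $(x,z)\in R_h$. Columns indexed by $z$ with $(x,z)\notin R_h$ are automatically zero by the right factor $E_h^*$.

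Both directions of the equivalence then follow at once. If $p_{ij}^h = 0$, every column is zero and hence $E_i^* A_j E_h^* = 0$; conversely, if $p_{ij}^h > 0$, then for any $z\in R_h(x)$ at least one $y$ yields a nonzero entry, so the product is not the zero matrix. There is essentially no obstacle in this argument: the lemma is a direct translation of the definition of the intersection numbers into the idempotent language of $\mathcal{T}$, and the only point that merits mention is the well-definedness of $p_{ij}^h$ (its independence of the chosen pair in $R_h$), which is built into the axioms of an association scheme.
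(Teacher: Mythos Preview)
Your argument is correct. The paper does not supply its own proof of this lemma; it simply cites Terwilliger's original paper, so there is nothing to compare beyond noting that your direct entrywise computation is the standard way to see the equivalence.
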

Let $\mathcal{T}_0 (x)$ be the subspace of $\mathcal{T}(x)$ spanned by $\{ E_i ^* A_j E_h ^* \mid 0 \leq i,j,h \leq d \}$.
It is easy to see that $\mathcal{T}(x)$ is generated by $\mathcal{T}_0 (x)$ as an algebra, but in general, $\mathcal{T}_0 (x)$ may be a proper
linear subspace of $\mathcal{T}(x)$.

The following lemma was proved in \cite{terwilliger}, where the irreducible module is called the \textit{primary} module.

\begin{lem}\label{lem:primary}
In the standard module $V$ given above,

$Span \{ E_0 ^* \mathbf{1}, E_1 ^* \mathbf{1}, \dotsc , E_d ^* \mathbf{1} \}$ is an irreducible $\mathcal{T}(x)$-module of dimension $d+1$.
\end{lem}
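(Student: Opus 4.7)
The plan is to establish three things in turn: that $W:=\mathrm{Span}\{E_0^*\mathbf{1},\dots,E_d^*\mathbf{1}\}$ has dimension $d+1$, that $W$ is $\mathcal{T}(x)$-invariant, and that $W$ is simple as a $\mathcal{T}(x)$-module.

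For the dimension, I would note that $E_i^*\mathbf{1}$ is nothing but the characteristic vector of $R_i(x)$. Conditions (1)--(2) in the definition of an association scheme imply that the sets $R_0(x),\dots,R_d(x)$ partition $X$, and each is nonempty because the valency $n_i=|R_i(x)|$ is positive. Therefore these $d+1$ characteristic vectors are linearly independent, giving $\dim W=d+1$. For invariance, it suffices to verify that the algebra generators $E_j^*$ and $A_j$ preserve $W$. The identity $E_j^*E_i^*\mathbf{1}=\delta_{ji}E_i^*\mathbf{1}$ is immediate. For $A_j$ I would compute
\[
(A_jE_i^*\mathbf{1})_y \;=\; |R_{j'}(y)\cap R_i(x)|,
\]
and observe that by the very definition of the intersection numbers, this count depends on $y$ only through the class $R_h$ containing $(x,y)$. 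Hence $A_jE_i^*\mathbf{1}$ is constant on each fiber $R_h(x)$, so it lies in $W$.

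The heart of the argument is irreducibility. The key remark is that $\hat{x}=E_0^*\mathbf{1}$ generates $W$ cyclically, because a direct calculation gives $A_{j'}\hat{x}=E_j^*\mathbf{1}$ for every $j$ (the $y$-entry of $A_{j'}\hat{x}$ equals $1$ exactly when $(x,y)\in R_j$). Thus it is enough to show that every nonzero $\mathcal{T}(x)$-submodule $M\subseteq W$ contains $\hat{x}$. Starting from a nonzero $v=\sum_i c_iE_i^*\mathbf{1}\in M$, choose $k$ with $c_k\neq 0$; applying $E_k^*$ extracts $E_k^*\mathbf{1}\in M$. Next apply $E_0^*A_k$: the composite matrix $E_0^*A_kE_k^*$ is nonzero by Lemma~\ref{lem:inter} since $p_{0k}^k=1$, and an explicit computation shows its action on $\mathbf{1}$ has a single nonzero entry at $x$, equal to $|R_k(x)|=n_k>0$. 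Hence $n_k\hat{x}\in M$, so $\hat{x}\in M$, and then the cyclicity remark gives $M=W$.

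I do not anticipate a serious obstacle; the proof reduces to the two easy identities $A_{j'}\hat{x}=E_j^*\mathbf{1}$ and $E_0^*A_kE_k^*\mathbf{1}=n_k\hat{x}$, both of which are just translations of the definitions of the relations and the matrices $E_i^*$. The only point requiring minor care is the bookkeeping with the transpose convention $A_j^t=A_{j'}$ when converting the matrix entry $(A_j)_{yz}$ into the relational predicate $(y,z)\in R_j$ versus $(z,y)\in R_j$.
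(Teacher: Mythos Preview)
Your argument is correct. The paper does not prove Lemma~\ref{lem:primary} itself; it simply records the statement and attributes it to \cite{terwilliger}, where the primary module is introduced. So your self-contained proof goes beyond what the paper provides. One small slip: in your invariance step you write $(A_jE_i^*\mathbf{1})_y=|R_{j'}(y)\cap R_i(x)|$, but with the paper's conventions $(A_j)_{yz}=1$ iff $(y,z)\in R_j$, so the correct expression is $|R_j(y)\cap R_i(x)|=p_{ij'}^{h}$ for $(x,y)\in R_h$. This does not affect the conclusion, and you already flagged the need for care with the transpose convention.
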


In \cite{jaeger} an association scheme $\chi$ is called \textit{triply-regular} if
$p_{ijh}^{lmn}(x,y,z)=| R_i(x) \cap R_j(y) \cap R_h(z) |$
depends only on $i,j,h,l,m,n$ where $(x,y) \in R_l, (x,z) \in R_m$ and $(y,z) \in R_n$.

\begin{lem}\label{lem:triplyregular}{\cite{munemasa}}
Let $\chi$ be a commutative association scheme. Then $\chi$ is triply-regular if and only if $\mathcal{T}(x)=\mathcal{T}_0 (x)$ for every $x \in X$.
\end{lem}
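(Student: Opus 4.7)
The plan is to exploit the fact that $\mathcal{T}(x)=\mathcal{T}_0(x)$ if and only if $\mathcal{T}_0(x)$ is closed under multiplication, and then reduce this closure problem to analyzing a single five-index product. Since $E_h^*E_k^*=\delta_{hk}E_h^*$, the product of two basis elements $E_i^*A_jE_h^*\cdot E_k^*A_lE_m^*$ either vanishes or equals $E_i^*A_jE_h^*A_lE_m^*$, so the lemma hinges entirely on understanding when this latter product lies in $\mathcal{T}_0(x)$.

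A direct entry-by-entry computation gives $(E_i^*A_jE_h^*A_lE_m^*)_{yz}=|R_h(x)\cap R_j(y)\cap R_{l'}(z)|$ whenever $(x,y)\in R_i$ and $(x,z)\in R_m$, and $0$ otherwise. Hence the support of the product is contained in the union of supports of the matrices $E_i^*A_nE_m^*$ as $n$ ranges over $\{0,\dots,d\}$, and those matrices are $(0,1)$-matrices with pairwise disjoint supports, since each admissible pair $(y,z)$ lies in a unique relation $R_n$. Consequently the product lies in $\mathcal{T}_0(x)$ if and only if its entry at $(y,z)$ depends only on $n=n(y,z)$, with $i,j,h,l,m$ fixed.

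In the forward direction, triply-regularity is precisely the statement that $|R_h(x)\cap R_j(y)\cap R_{l'}(z)|$ depends only on the six indices $i,j,h,l,m,n$, so the product immediately lies in $\mathcal{T}_0(x)$ and we conclude $\mathcal{T}(x)=\mathcal{T}_0(x)$ for every $x$. In the reverse direction, the assumed equality lets us expand the product in the spanning set $\{E_a^*A_bE_c^*\}$; the support constraint forces the expansion to involve only matrices of the form $E_i^*A_nE_m^*$, and reading off the entries recovers the desired index-only dependence. Letting $x$ range over $X$ then delivers triply-regularity.

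I expect no serious obstacle. The only points requiring care are the transposed index $l'$, which appears because $(w,z)\in R_l$ enters the walk $y\to w\to z$ as $(z,w)\in R_{l'}$, and the need to let $x$ vary over all of $X$ rather than fix a single base vertex, in order to match the definition of triply-regular.
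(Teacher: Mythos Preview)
The paper does not supply its own proof of this lemma: it is quoted from Munemasa's unpublished preprint with only the citation \cite{munemasa} and no argument. So there is nothing in the paper to compare your proposal against line by line.

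That said, your argument is correct and is essentially the standard one. The reduction to closure of $\mathcal{T}_0(x)$ under multiplication is valid because $A_j=\sum_{i,h}E_i^*A_jE_h^*$ and $E_i^*=E_i^*A_0E_i^*$ both lie in $\mathcal{T}_0(x)$, so $\mathcal{T}_0(x)$ already contains a generating set for $\mathcal{T}(x)$. Your entrywise computation
\[
(E_i^*A_jE_h^*A_lE_m^*)_{yz}=\bigl|\,R_h(x)\cap R_j(y)\cap R_{l'}(z)\,\bigr|
\quad\text{for }y\in R_i(x),\ z\in R_m(x),
\]
is right, including the transposed index $l'$. The forward direction is then immediate. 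For the converse, your observation that the nonzero $(0,1)$-matrices $E_a^*A_bE_c^*$ have pairwise disjoint supports (hence are linearly independent), together with the fact that the support of the product sits inside the $(i,m)$-block, forces any $\mathcal{T}_0(x)$-expansion to use only the terms $E_i^*A_nE_m^*$; reading off the coefficient at a pair $(y,z)$ with $(y,z)\in R_n$ recovers the required dependence on $n$ alone. Finally, varying the base vertex $x$ is indeed necessary and sufficient to obtain the full triply-regular condition, exactly as you note.

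One minor remark: commutativity of $\chi$ is part of the hypothesis but is not actually invoked anywhere in your argument, and indeed it is not needed for this particular equivalence; the lemma is stated under that hypothesis simply because that is the setting of the paper (and of Munemasa's note).
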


\section{The triply-regularity of the Terwilliger algebra of $ C_{p_1} \wr C_{p_2} \wr \cdots \wr C_{p_d} $}\label{sec:main}
In this section, we will show that $ C_{p_1} \wr C_{p_2} \wr \cdots \wr C_{p_d} $
is triply-regular. Let $(X_i, \{R_j\}_{0 \leq j \leq p_i -1})$ be a cyclic scheme of order $p_i$.
We denote it by $C_{p_i}$.
In general,
we denote $X_i$ by $\{x_{i,0}, x_{i,1}, \dotsc , x_{i,{p_i -1}} \}$. We assume that $(A_1)^j = A_j$ and $R_1(x_0) = \{x_1\}$.

Let $\chi = (X, \{ R_{(i,\alpha)}\}_{ 0 \leq (i,\alpha) \leq \Sigma_{j=1} ^d (p_j - 1) })$ denote $ C_{p_1} \wr C_{p_2} \wr \cdots \wr C_{p_d} $
, where $(i,\alpha) = 0$ for $i=0$, $1 \leq \alpha \leq p_i -1$, and $(i,\alpha) = \Sigma_{j=1} ^{i-1} (p_j -1) + \alpha$ for  $1 \leq i \leq d$
, $1 \leq \alpha \leq p_i -1$.
Note that the $\alpha$ is read by modulo $p_i$.
Then $\chi$ has $X_1 \times X_2 \times \cdots \times X_d$ as the vertex set.
For $x \in X$, we denote it by $(x_1, x_2, \dotsc , x_d)$. Sometimes we specify $x$ by $(x_{1,a}, x_{2,b}, \dotsc , x_{d,z})$.

\begin{lem}\label{lem:basic1}
For $1 \leq i \leq d$, $1 \leq \alpha \leq p_i -1$ and $x \in X$, $R_{i,\alpha} (x)$ induces an association scheme which is isomorphic to $C_{p_1} \wr C_{p_2} \wr \cdots \wr C_{p_{{i-1}}}$.
\end{lem}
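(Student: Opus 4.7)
The plan is to induct on $d$, writing $\chi = \psi \wr C_{p_d}$ with $\psi := C_{p_1} \wr \cdots \wr C_{p_{d-1}}$ and exploiting the two defining rules of the wreath product: rule (1) keeps every lower-level relation entirely within a single $\psi$-fiber, while rule (2) makes the top-level relations (those coming from $C_{p_d}$) ignore the first coordinate. The base case $d = 1$ is immediate: each relation of $C_{p_1}$ has valency $1$ (since $(A_1)^j = A_j$ makes $A_1$ a permutation matrix), so $R_{(1,\alpha)}(x)$ is a one-point set, and the empty wreath product $C_{p_1} \wr \cdots \wr C_{p_0}$ is taken as the trivial scheme on one point.

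For the inductive step with $d \geq 2$, write $x = (w, x_d)$ with $w = (x_1, \ldots, x_{d-1})$. If $1 \leq i \leq d-1$, then rule (1) forces $y_d = x_d$ for every $y \in R_{(i,\alpha)}(x)$, so this set lies in the fiber $(X_1 \times \cdots \times X_{d-1}) \times \{x_d\}$, which (again by rule (1), and because no $R_{(d,\beta)}$ can link two points with equal last coordinate) carries exactly the scheme $\psi$. Thus the induced scheme on $R_{(i,\alpha)}(x)$ equals the one induced by $R_{(i,\alpha)}(w)$ inside $\psi$, which the inductive hypothesis identifies with $C_{p_1} \wr \cdots \wr C_{p_{i-1}}$. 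If $i = d$, then rule (2) together with the valency-one property of $C_{p_d}$ gives $R_{(d,\alpha)}(x) = (X_1 \times \cdots \times X_{d-1}) \times \{y_d\}$ for the unique $y_d$ with $(x_d, y_d) \in R_\alpha$ in $C_{p_d}$; this is a full $\psi$-fiber, and so the induced scheme is $\psi = C_{p_1} \wr \cdots \wr C_{p_{d-1}}$, as required.

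The main point to watch is the bookkeeping with rules (1) and (2) — in particular verifying that no $R_{(k,\beta)}$ with $k > i$ can link two points of $R_{(i,\alpha)}(x)$ — together with the convention on the empty wreath product. I do not anticipate any real obstacle beyond this clean unwinding of the definitions; the argument is essentially a reduction of the lemma to the inductive hypothesis via the one-step decomposition $\chi = \psi \wr C_{p_d}$.
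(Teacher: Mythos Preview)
Your proof is correct and is essentially what the paper means by ``By definition of the wreath product, it is trivial'': you have simply spelled out, via induction on $d$, the unwinding of the definition that the paper leaves implicit. The paper's own argument (as one sees from the computation in the very next lemma) would describe $R_{(i,\alpha)}(x)$ directly as $X_1 \times \cdots \times X_{i-1} \times \{x_{i,\alpha}\} \times \{x_{(i+1),0}\} \times \cdots \times \{x_{d,0}\}$ and read off the induced scheme, but your inductive packaging amounts to the same thing.
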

\begin{proof}
By definition of the wreath product, it is trivial.
\end{proof}

\begin{lem}\label{lem:basic2}
Let $y \in R_{i,\alpha}(x)$, $z \in R_{j,\beta}(x)$.
\begin{enumerate}
\item[(1)] If $i=j$, then $(y,z) \in R_h$ for some $0 \leq h \leq (i,p_{i} - 1)$.
\item[(2)] If $i > j$, then $(z,y) \in R_{i,\alpha}$.
\end{enumerate}
\end{lem}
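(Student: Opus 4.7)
The plan rests on first extracting, from the recursive adjacency-matrix formula for the wreath product, the following explicit characterization: for $1 \leq i \leq d$ and $1 \leq \alpha \leq p_i - 1$, a pair $(x,y) \in R_{i,\alpha}$ precisely when $y_k = x_k$ for every $k > i$ and $(x_i, y_i) \in R_\alpha$ inside $C_{p_i}$, while the coordinates $y_1, \dotsc, y_{i-1}$ are unconstrained. Unwinding the formulas $C_0 \otimes A_0,\dotsc,C_0 \otimes A_d, C_1 \otimes J_u,\dotsc$ inductively supplies this: the ``$C_0 \otimes$'' factors force equality in the outer coordinate, while the ``$\otimes J_u$'' factors leave inner coordinates free. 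This is really the only ingredient; once it is available, both parts become coordinate-by-coordinate checks.

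For Part (1), if $y \in R_{i,\alpha}(x)$ and $z \in R_{i,\beta}(x)$, the characterization forces both $y$ and $z$ to agree with $x$ at every coordinate strictly greater than $i$, and in particular $y_k = z_k$ for all such $k$. Hence the largest coordinate on which $y$ and $z$ can disagree is $i$, so either $y = z$ (giving $(y,z) \in R_0$) or $(y,z) \in R_{k,\gamma}$ for some $1 \leq k \leq i$ and $1 \leq \gamma \leq p_k - 1$. In every case the index $h$ satisfies $h \leq (i, p_i - 1) = \sum_{j=1}^{i}(p_j-1)$, as required.

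For Part (2), the hypothesis $i > j$ together with $z \in R_{j,\beta}(x)$ forces $z_k = x_k$ for all $k > j$, so in particular $z_i = x_i$ and $z_k = x_k$ for every $k > i$. Combined with $y_k = x_k$ for $k > i$ and $y_i = x_i + \alpha$, this gives $y_k = z_k$ for $k > i$ and $(z_i, y_i) \in R_\alpha$ in $C_{p_i}$. Since the coordinates below $i$ are unconstrained by the definition of $R_{i,\alpha}$, we conclude $(z,y) \in R_{i,\alpha}$. The only step demanding any attention is the initial unpacking of the wreath-product relations; after that, the verifications are routine, so I do not anticipate a real obstacle.
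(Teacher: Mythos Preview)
Your proof is correct and follows essentially the same approach as the paper's: both extract the explicit coordinate description $R_{i,\alpha}(x)=X_1\times\cdots\times X_{i-1}\times\{x_i+\alpha\}\times\{x_{i+1}\}\times\cdots\times\{x_d\}$ and then read off the two statements directly. The only cosmetic difference is that the paper normalizes $x$ to $(x_{1,0},\dotsc,x_{d,0})$ before writing out $R_{i,\alpha}(x)$, whereas you state the characterization for arbitrary $x$; the verifications in parts (1) and (2) are then identical in content.
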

\begin{proof}
Without loss of generality, we may assume that $x$ is $(x_{1,0}, x_{2,0}, \dotsc , x_{d,0})$.
Then $R_{i,\alpha}(x)=X_1 \times X_2 \times \cdots \times X_{i-1} \times \{ x_{i,\alpha} \} \times \{x_{(i+1),0} \} \times
\cdots \times \{ x_{d,0} \}$.
If $i=j$, then $R_{j,\beta}(x)=X_1 \times X_2 \times \cdots \times X_{i-1} \times \{ x_{i,\beta} \} \times \{x_{(i+1),0} \} \times
\cdots \times \{ x_{d,0} \}$.
So $(y,z) \in R_h$ for some $h$.
If $i > j$, then $R_{j,\beta}(x)=X_1 \times X_2 \times \cdots \times X_{j-1} \times \{ x_{j,\beta} \} \times \{x_{(j+1),0} \} \times
\cdots \times \{ x_{d,0} \}$. So $R_{j,\beta}(x) \subset X_1 \times X_2 \times \cdots \times X_{i-1} \times \{ x_{i,0} \} \times \{x_{(i+1),0} \} \times
\cdots \times \{ x_{d,0} \}$. Hence $(z,y) \in R_{i,\alpha}$.
\end{proof}

\begin{lem}\label{lem:pijh}
For $0 \leq i,j,h \leq d$ and $1 \leq \alpha \leq p_i -1$, $1 \leq \beta \leq p_j -1$, $1 \leq \gamma \leq p_h -1$,
$p_{(i,\alpha) (j,\beta)} ^{(h,\gamma)} = 0$ if and only if one of the following holds:
\begin{enumerate}
\item[(1)] $i=j=h \neq 0$ and $\alpha + \beta \not\equiv \gamma$ (mod $p_i$);
\item[(2)] $i = j < h$ , $i = h < j$ or $ j = h < i$;
\item[(3)] $h < i = j$ and $\alpha + \beta \not\equiv 0$ (mod $p_i$);
\item[(4)] $j < i =h $ and $\alpha \neq \gamma$;
\item[(5)] $i < j = h$ and $\beta \neq \gamma$;
\item[(6)] $i,j,h$ are all distinct.
\end{enumerate}
\end{lem}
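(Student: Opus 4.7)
The plan is to compute $p_{(i,\alpha)(j,\beta)}^{(h,\gamma)}$ directly from the identity
\[
p_{(i,\alpha)(j,\beta)}^{(h,\gamma)} = |R_{(i,\alpha)}(x) \cap R_{(j,\beta)'}(y)|
\]
for any fixed $(x,y) \in R_{(h,\gamma)}$, using the explicit coordinate description of wreath-product relations appearing in the proof of Lemma~\ref{lem:basic2}. Since in the cyclic scheme $C_{p_j}$ we have $R_\beta^t = R_{p_j - \beta}$, the transpose relation at level $j$ in the wreath product is $R_{(j,\beta)'} = R_{(j,-\beta)}$ with $-\beta$ read modulo~$p_j$. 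Taking $x = (x_{1,0}, \ldots, x_{d,0})$ without loss of generality, a vertex $z$ lies in $R_{(i,\alpha)}(x)$ iff $z_i = x_{i,\alpha}$, $z_k = x_{k,0}$ for $k > i$, and $z_k$ is arbitrary for $k < i$; it lies in $R_{(j,-\beta)}(y)$ iff $z_j = y_j - \beta$, $z_k = y_k$ for $k > j$, and $z_k$ is arbitrary for $k < j$. Meanwhile the hypothesis $(x,y) \in R_{(h,\gamma)}$ pins $y_h = x_{h,\gamma}$ and $y_k = x_{k,0}$ for $k > h$.

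The bulk of the argument is then a case analysis on the relative order of $i, j, h$. If $i = j = h = m$, the two constraints on $z_m$ read $z_m = x_{m,\alpha}$ and $z_m = x_{m,\gamma - \beta}$, so the intersection is empty precisely when $\alpha + \beta \not\equiv \gamma \pmod{p_m}$, recovering case~(1). If exactly two indices agree while the third is \emph{smaller}, some shared coordinate of $z$ is pinned by both relations to values whose equality yields the listed congruence: $i = j > h$ gives $\alpha + \beta \equiv 0$ (case~(3)), $i = h > j$ gives $\alpha = \gamma$ (case~(4)), and $j = h > i$ gives $\beta = \gamma$ (case~(5)). If instead the odd-one-out is \emph{larger}, the same comparison pits a nonzero shift ($\alpha$, $\beta$, or $\gamma$) against $0$ at that level, giving the three variants of case~(2). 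Finally, when $i, j, h$ are pairwise distinct, the coordinate of $z$ at the largest of the three levels is overdetermined and again matches a nonzero shift against $0$, producing case~(6).

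To close the equivalence, in each configuration not covered by (1)--(6) the pinned values of $z$ are consistent and genuine free coordinates survive, so an explicit $z$ in the intersection can be read off (in fact the count is $p_1 \cdots p_{\min(i,j,h)-1}$). I expect the only real obstacle to be organizational: juggling the six linear orderings of pairwise-distinct indices together with the six sign/congruence subcases of the two-equal configuration, while keeping the coordinate constraints on $z$ straight. No conceptual difficulty arises beyond unwinding the wreath-product description of the relations.
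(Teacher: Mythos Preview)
Your proposal is correct and follows essentially the same approach as the paper: both compute $p_{(i,\alpha)(j,\beta)}^{(h,\gamma)} = |R_{(i,\alpha)}(x)\cap R_{(j,-\beta)}(y)|$ for $(x,y)\in R_{(h,\gamma)}$ via the explicit coordinate description of the wreath-product neighborhoods (as in the proof of Lemma~\ref{lem:basic2}), and then perform a case analysis on the relative order of $i,j,h$. The paper's write-up is terser---it argues, for instance, that $R_{i,\alpha}(x)=R_{i,\alpha}(y)$ when $h<i$, or that $\bigcup_\alpha R_{i,\alpha}(x)$ and $\bigcup_\alpha R_{i,\alpha}(y)$ are disjoint when $i=j<h$---while your version tracks the individual coordinate constraints on $z$ more explicitly, but the substance is the same.
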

\begin{proof}
(1) If $i=j=h=0$, then $p_{(i,\alpha) (j,\beta)} ^{(h,\gamma)} = 1$.
Assume that $i=j=h \neq 0$. Then for $(x,y) \in R_{i,\gamma}$, we consider $R_{i,\alpha}(x) \cap R_{i,p-\beta}(y)$.
Since $C_{p_i}$ is cyclic, $\alpha + \beta \equiv \gamma$ (mod $p_i$) if and only if $R_{i,\alpha}(x) \cap R_{i,p-\beta}(y) \neq \emptyset$.

(2) We prove only the case of $i = j < h$. For $(x,y) \in R_{h,\gamma}$, $R_{i,\alpha}(x) \cap R_{i,p-\beta}(y)=\emptyset$,
since $\bigcup_\alpha R_{i,\alpha}(x)$ and $\bigcup_\alpha R_{i,\alpha}(y)$ are disjoint.

(3) Assume that $h < i = j$. Then for $(x,y) \in R_{h,\gamma}$, $R_{i,\alpha}(x) = R_{i,\alpha}(y)$.
Thus $R_{i,\alpha}(x) \cap R_{i,p-\beta}(y) \neq \emptyset$ if and only if $\alpha + \beta \equiv 0$ (mod $p_i$).

(4) and (5) are similar to (3).
(6) is trivial.
\end{proof}

\begin{thm}\label{thm:triply}
$\chi= C_{p_1} \wr C_{p_2} \wr \cdots \wr C_{p_d}$ is triply-regular.
\end{thm}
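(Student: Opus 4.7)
The plan is to invoke Lemma~\ref{lem:triplyregular}: triple-regularity is equivalent to showing $\mathcal{T}(x) = \mathcal{T}_0(x)$ for every $x \in X$. The direct product $\mathbb{Z}/p_1 \times \cdots \times \mathbb{Z}/p_d$ acts on $X$ by coordinate-wise translation and preserves every relation $R_{(i,\alpha)}$, so $\chi$ is vertex-transitive, and it suffices to treat the single vertex $x = (x_{1,0}, x_{2,0}, \ldots, x_{d,0})$.

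Since $\mathcal{T}(x)$ is generated as an algebra by the linear subspace $\mathcal{T}_0(x)$, what we need is closure under multiplication. A product of spanning elements $(E_a^* A_b E_c^*)(E_{c'}^* A_d E_e^*)$ vanishes unless $c = c'$, so the task reduces to showing
\[
E_a^* A_b E_c^* A_d E_e^* \in \mathcal{T}_0(x)
\]
for all five-tuples of indices (each of the form $(i,\alpha)$). A direct computation of the $(y,w)$-entry, for $y$ with $(x,y) \in R_a$ and $w$ with $(x,w) \in R_e$, yields the count $|R_c(x) \cap R_{b'}(y) \cap R_d(w)|$, and to lie in $\mathcal{T}_0(x)$ this count must depend only on the relation $R_m$ containing the pair $(y,w)$.

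To verify the latter, I would write $x$, $y$, $w$, and any candidate $z$ coordinatewise in $X_1 \times \cdots \times X_d$ and use the explicit product description of $R_{(i,\alpha)}(\cdot)$ displayed in the proof of Lemma~\ref{lem:basic2}. Coordinate by coordinate, the three membership conditions force $z_k$ to a specific value (determined by the $k$-th coordinates of $x,y,w$ and by the level/$\alpha$ data of the indices) or leave $z_k$ free in $X_k$. Lemma~\ref{lem:pijh} identifies the configurations in which the forced values are incompatible, contributing $0$; in the remaining cases the count is a product $\prod_{k \in F} p_k$ over the set $F$ of free coordinates, and both $F$ and the compatibility condition are intrinsic to the pairwise relations among $x,y,w$ and to the five chosen indices, not to the particular vertices. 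The main obstacle is the bookkeeping: the case analysis branches on the relative ordering of the levels in $a,b,c,d,e$ and on the mod-$p_i$ relations among their $\alpha$-parts. The recursive neighborhood structure of Lemma~\ref{lem:basic1} lets the top-level cases be peeled off, organizing the argument as an induction on $d$.
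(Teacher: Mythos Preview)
Your plan is sound in outline, and its computational core---showing that the triple-intersection count $|R_c(x)\cap R_{b'}(y)\cap R_d(w)|$ depends only on the pairwise relations among $x,y,w$---is exactly what the paper establishes. But routing the argument through Lemma~\ref{lem:triplyregular} is a detour that buys nothing: once you observe that the $(y,w)$-entry of $E_a^* A_b E_c^* A_d E_e^*$ equals that triple count, asking it to depend only on the relation between $y$ and $w$ (given the two relations from $x$) \emph{is} the definition of triple-regularity. You end up verifying the defining property directly, and the equivalence $\mathcal{T}(x)=\mathcal{T}_0(x)$ never does any work. The paper accordingly argues straight from the definition.

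Where the paper differs organizationally: instead of an induction on $d$, it fixes the three pairwise relations $(l,a),(m,b),(n,c)$ among $x,y,z$, orders them so $l\ge m\ge n$, and splits into Case~1 ($l=m=n$) and Case~2 ($l=m>n$); each case then branches on where the level $i$ of the first target index sits relative to $l$. In every sub-case Lemma~\ref{lem:pijh} either forces the count to $0$ or collapses it to a single $|R_{i,\alpha}(\cdot)|$ or a pairwise intersection $|R_{i,\alpha}(\cdot)\cap R_{j,\beta}(\cdot)|$, both of which are ordinary intersection numbers and hence vertex-independent. Your inductive repackaging via Lemma~\ref{lem:basic1} would also succeed, but the case analysis it conceals is essentially the one the paper writes out explicitly; neither framing avoids the bookkeeping you correctly identify as the main obstacle, and in your proposal that bookkeeping remains to be done.
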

\begin{proof}
We claim that for $(x,y) \in R_{l,a}, (x,z) \in R_{m,b}$ and $(y,z) \in R_{n,c}$, $p_{(i,\alpha) (j,\beta) (h,\gamma)}^{(l,a) (m,b) (n,c)}$ $(x,y,z)
$ $=$ $\mid R_{i,\alpha}(x) \cap R_{j,\beta}(y) \cap R_{h,\gamma}(z) \mid$ depends only on $(i,\alpha)$, $(j,\beta)$, $(h,\gamma)$, $(l,a)$, $(m,b)$, $(n,c)$.

Without loss of generality, we can assume that $l \geq m \geq n$. By Lemma \ref{lem:pijh}, it suffices to show that $p_{(i,\alpha) (j,\beta) (h,\gamma)}^{(l,a) (m,b) (n,c)}(x,y,z)$ is independent of $x,y,z$ for given $l = m \geq n$. We consider the following two cases.

Case 1: $l = m = n$.

Case 1a: $l=i$.
If either $j > i$ or $h > i$, then $p_{(i,\alpha) (j,\beta) (h,\gamma)}^{(l,a) (m,b) (n,c)}(x,y,z)=0$ by Lemma \ref{lem:pijh}.
On the other hand, if $i=h=j$, $i=h >j$ or $i=j > h$, then since the set of relations of $C_{p_i}$ is a cyclic group,
$p_{(i,\alpha) (j,\beta) (h,\gamma)}^{(l,a) (m,b) (n,c)}(x,y,z)$ is independent of $x,y,z$.

Case 1b: $l < i$.
If either $h \neq i$ or $j \neq i$, then $p_{(i,\alpha) (j,\beta) (h,\gamma)}^{(l,a) (m,b) (n,c)}(x,y,z)=0$ by Lemma \ref{lem:pijh}.
On the other hand, assume that $i=j=h$. If $\alpha \neq \beta$, $\alpha \neq \gamma$ or $\beta \neq \gamma$, then $p_{(i,\alpha) (j,\beta) (h,\gamma)}^{(l,a) (m,b) (n,c)}(x,y,z)=0$ by Lemma \ref{lem:pijh}. So we assume that $\alpha = \beta = \gamma$.
Since $l < i = j = h$, $p_{(i,\alpha) (j,\beta) (h,\gamma)}^{(l,a) (m,b) (n,c)}(x,y,z) = | R_{i,\alpha}(x) |$.

Case 1c: $i < l$.
If either $h \leq i$ or $j \leq i$, then $p_{(i,\alpha) (j,\beta) (h,\gamma)}^{(l,a) (m,b) (n,c)}(x,y,z)=0$ by Lemma \ref{lem:pijh}.
On the other hand, assume that $h=j=l$. If either $b \neq p- \gamma$ or $a \neq p -\beta$, then $p_{(i,\alpha) (j,\beta) (h,\gamma)}^{(l,a) (m,b) (n,c)}(x,y,z)=0$ by Lemma \ref{lem:pijh}. So we assume that $b = p- \gamma$ and $a = p -\beta$.
Hence $p_{(i,\alpha) (j,\beta) (h,\gamma)}^{(l,a) (m,b) (n,c)}(x,y,z)=| R_{i,\alpha}(x) | $.

Case 2: $l = m > n$.

Case 2a: $l=i$.
If either $h > l$ or $j > l$, then $p_{(i,\alpha) (j,\beta) (h,\gamma)}^{(l,a) (m,b) (n,c)}(x,y,z)=0$ by Lemma \ref{lem:pijh}.
On the other hand, assume that $l \geq h$ and $l \geq j$. Since $n < l$, by Lemma \ref{lem:pijh}, $j=h=l$ or $j=h=n$.
If $j=h=l$, then $p_{(i,\alpha) (j,\beta) (h,\gamma)}^{(l,a) (m,b) (n,c)}(x,y,z) = | R_{i,\alpha}(x) \cap R_{j,\beta}(y) |$ if $\beta = \gamma$, $0$ otherwise.
If $j=h=n$, then $p_{(i,\alpha) (j,\beta) (h,\gamma)}^{(l,a) (m,b) (n,c)}(x,y,z) = 0$ unless $c+\gamma=\beta$ (mod $p_n$).
Also $p_{(i,\alpha) (j,\beta) (h,\gamma)}^{(l,a) (m,b) (n,c)}(x,y,z) = 0$ unless $\alpha=a=b$. Thus if $c+\gamma=\beta$ (mod $p_n$) and $\alpha=a=b$,
then $p_{(i,\alpha) (j,\beta) (h,\gamma)}^{(l,a) (m,b) (n,c)}(x,y,z) = | R_{h,\gamma}(z) \cap R_{j,\beta}(y) |$.
By Lemma \ref{lem:basic1}, $p_{(i,\alpha) (j,\beta) (h,\gamma)}^{(l,a) (m,b) (n,c)}(x,y,z)$ is independent of $x, y, z$.

Case 2b: $l > i$.
If either $h \neq l$ or $j \neq l$,then $p_{(i,\alpha) (j,\beta) (h,\gamma)}^{(l,a) (m,b) (n,c)}(x,y,z)=0$ by Lemma \ref{lem:pijh}.
On the other hand, assume that $l=h=j$. Since $(y,z) \in R_{n,c}$, $p_{(i,\alpha) (j,\beta) (h,\gamma)}^{(l,a) (m,b) (n,c)}(x,y,z)=| R_{i,\alpha}(x)
\cap R_{j,\beta}(y) |$ if $\beta = \gamma$, 0 otherwise.

Case 2c: $l < i$.
If either $h \neq i$ or $j \neq i$,then $p_{(i,\alpha) (j,\beta) (h,\gamma)}^{(l,a) (m,b) (n,c)}(x,y,z)=0$ by Lemma \ref{lem:pijh}.
On the other hand, assume that $l < i=h=j$. Then $p_{(i,\alpha) (j,\beta) (h,\gamma)}^{(l,a) (m,b) (n,c)}(x,y,z)=|R_{i,\alpha}(x)|$ if $\alpha = \beta = \gamma$, 0 otherwise.
\end{proof}

\begin{thm}\label{thm:nonzero}
The complete list of nonzero triple products $E_{(i,\alpha)}^* A_{(j,\beta)} E_{(h,\gamma)}^*$, where $0 \leq i,j,h \leq d$ and
$1 \leq \alpha \leq p_i -1$, $1 \leq \beta \leq p_j -1$, $1 \leq \gamma \leq p_h -1$, is given as follows:
\begin{enumerate}
\item[(1)] $E_{(i,\alpha)}^* A_{(i,\beta)} E_{(i,\gamma)}^*$ for $i=0$;
\item[(2)] $E_{(i,\alpha)}^* A_{(i,\beta)} E_{(i,\gamma)}^*$ for $i \neq 0$ and $\alpha + \beta \equiv \gamma$ (mod $p_i$);
\item[(3)] $E_{(i,\alpha)}^* A_{(i,\beta)} E_{(h,\gamma)}^*$ for $0 \leq h < i \leq d$ and $\alpha + \beta \equiv 0$ (mod $p_i$);
\item[(4)] $E_{(i,\alpha)}^* A_{(j,\beta)} E_{(i,\gamma)}^*$ for $0 \leq j < i \leq d$ and $\alpha = \gamma$;
\item[(5)] $E_{(i,\alpha)}^* A_{(j,\beta)} E_{(j,\gamma)}^*$ for $0 \leq i < j \leq d$ and $\beta = \gamma$.
\end{enumerate}
\end{thm}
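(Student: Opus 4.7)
By Lemma~\ref{lem:inter} the triple product $E_{(i,\alpha)}^* A_{(j,\beta)} E_{(h,\gamma)}^*$ is nonzero if and only if the intersection number $p_{(i,\alpha)(j,\beta)}^{(h,\gamma)}$ is nonzero, and Lemma~\ref{lem:pijh} already provides a complete, mutually exhaustive list of six situations in which this number vanishes. The proof therefore reduces to negating that list and repackaging the surviving configurations into the five items of the statement.

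\textbf{Execution.} I would argue by case analysis on which of the upper indices $i$, $j$, $h$ coincide. If $i=j=h$, then the only potentially applicable vanishing condition is Lemma~\ref{lem:pijh}(1), so the product is nonzero precisely when $i=0$ (item~(1) of the theorem) or when $i\neq 0$ and $\alpha+\beta\equiv\gamma\pmod{p_i}$ (item~(2)). If exactly two of the indices coincide, three subcases arise. For $i=j\neq h$, condition~(2) of Lemma~\ref{lem:pijh} forces $h<i$ and condition~(3) then forces $\alpha+\beta\equiv 0\pmod{p_i}$, yielding item~(3). For $i=h\neq j$, conditions~(2) and~(4) give $j<i$ and $\alpha=\gamma$, yielding item~(4). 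For $j=h\neq i$, conditions~(2) and~(5) give $i<j$ and $\beta=\gamma$, yielding item~(5). Finally, if $i,j,h$ are pairwise distinct, condition~(6) always applies and no nonzero products occur.

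\textbf{Main obstacle.} There is no substantive obstacle once the bookkeeping is in place, since the heavy lifting has already been done in Lemma~\ref{lem:pijh}. The only point deserving care is to verify that the six vanishing conditions there are genuinely exhaustive under the ranges $1\leq\alpha\leq p_i-1$, $1\leq\beta\leq p_j-1$, $1\leq\gamma\leq p_h-1$, and, when some of the indices equal $0$, to remember that the corresponding secondary parameter is vacuous; this is why item~(1) is separated from item~(2) in the statement rather than folded into it via the convention $\alpha+\beta\equiv\gamma\pmod{p_0}$.
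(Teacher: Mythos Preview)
Your proposal is correct and follows exactly the paper's own approach: the paper simply writes ``By Lemma~\ref{lem:inter} and~\ref{lem:pijh}, it is trivial,'' and your case analysis is just an explicit unpacking of that sentence. There is nothing to add.
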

\begin{proof}
By Lemma \ref{lem:inter} and \ref{lem:pijh}, it is trivial.
\end{proof}

\section{Central primitive idempotents of the Terwilliger algebra of $ C_{p_1} \wr C_{p_2} \wr \cdots \wr C_{p_d} $}\label{sec:full}
Let $\chi = (X, \{ R_{(i,\alpha)}\}_{ 0 \leq (i,\alpha) \leq \Sigma_{j=1} ^d (p_j -1) })$ denote $ C_{p_1} \wr C_{p_2} \wr \cdots \wr C_{p_d} $
, where $(i,\alpha) = 0$ for $i=0$, $1 \leq \alpha \leq p_i -1$, and $(i,\alpha) = \Sigma_{j=1} ^{i-1} (p_j -1) + \alpha$ for  $1 \leq i \leq d$
, $1 \leq \alpha \leq p_i -1$.

In the rest of this section, we will use $[\Sigma_{i=1} ^d (p_i-1)]$ to denote the set $\{ 0, 1, \dotsc, \Sigma_{i=1} ^d (p_i -1) \}$.
We will denote $A_{(j,\beta)}$ as follows:
\begin{equation*}
\begin{bmatrix}
(A_{(j,\beta)})_{00} &(A_{(j,\beta)})_{01}  &(A_{(j,\beta)})_{02}  &\cdots  &(A_{(j,\beta)})_{0 (d, p_d - 1)}   \\
(A_{(j,\beta)})_{10} &(A_{(j,\beta)})_{11}  &(A_{(j,\beta)})_{12}  &\cdots  &(A_{(j,\beta)})_{1 (d, p_d - 1)}   \\
(A_{(j,\beta)})_{20} &(A_{(j,\beta)})_{21}  &(A_{(j,\beta)})_{22}  &\cdots  &(A_{(j,\beta)})_{2 (d, p_d - 1)}  \\
\vdots &\vdots  &\vdots  &\ddots  &\vdots   \\
(A_{(j,\beta)})_{(d, p_d - 1)0} &(A_{(j,\beta)})_{(d, p_d - 1)1}  &(A_{(j,\beta)})_{(d, p_d - 1)2}  &\cdots  &(A_{(j,\beta)})_{(d, p_d - 1)(d, p_d - 1)}   \\
\end{bmatrix},
\end{equation*}
where $((i,\alpha),(h,\xi))$-block of $A_{(j, \beta)}$ is a $n_{(i,\alpha)} \times n_{(h,\xi)}$ matrix.
In particular, for any $(i, \alpha), (j,\beta), (h, \xi) \in [\Sigma_{i=1} ^d (p_i -1)]$, we can write $E_{(i,\alpha)}^* A_{(j,\beta)} E_{(h,\xi)}^*$ as a block matrix.

Let $J_{p,q}$ denote the $p \times q$ matrix whose entries are all $1$.

\begin{lem}\label{lem:f1}
For any $(j, \beta) \in [\Sigma_{i=1} ^d (p_i -1)]$, $A_{(j, \beta)}=$
\[
\def\t{\multicolumn{1}{cl}{0}}
\left[
\begin{array}{ccccccccccccccccc}
0  &   0   &   \cdots  &    0   &    0     &  0  &\cdots  & 0& B_1    &  0   &  0  &  \cdots  &  0 &0 &0 &\cdots &0  \\
0  &   0   &   \cdots  &    0   &   0      &  0  &\cdots &0 &   B_2   &  0   &  0  &  \cdots  &  0 &0 &0 &\cdots &0  \\
\vdots   &   \vdots    &     &   \vdots    &    \vdots &  \vdots    &  \vdots   & \vdots   &  \vdots    &  \vdots  &  \vdots  &   & \vdots & \vdots    &    \vdots     &  \vdots   & \vdots     \\
0  &   0   &   \cdots  &    0   &    0  &  0  & \cdots  & 0 & B_3    & 0  &  0  &  \cdots  &  0  &0 &0 &\cdots &0 \\
0  &   0   &   \cdots  &    0   &    0     & 0 &\cdots &0 & 0&B_7  & 0  &  \cdots   &  0   &  0  & 0  &  \cdots  &0  \\
0  &   0   &   \cdots  &    0   &    0     &  0  &\cdots &0 &0&0 & B_8  & \cdots  &  0   &  0  &  0  &  \cdots  &0   \\
0  &   0   &   \cdots  &    0   &    0     &  0  &\cdots &0 &0 &0&0  &  \ddots   &  0   &  0  &  0  &  \cdots  &0   \\
0  &   0   &   \cdots  &    0   &    0     &  0  &\cdots &0 &0 &0& 0&  \cdots   & B_9  &  0  &  0  &  \cdots &0   \\
B_4  &   B_5  &   \cdots  &   B_6  &    0     &  0  & \cdots &  0   &0&  0   &  0  &  \cdots  &  0  &0 &0 &\cdots &0 \\
0  &   0   &   \cdots  &    0   &   B_{10}    &  0  & \cdots &  0   & 0& 0   &  0  &  \cdots  &  0  &0 &0 &\cdots &0 \\
0  &   0   &   \cdots  &    0   &   0    &  B_{11}  & \cdots  &  0   &  0&0   &  0  &  \cdots  &  0  &0 &0 &\cdots &0 \\
0  &   0   &   \cdots  &    0   &   0    &  0  &  \ddots & 0  &  0 &0  &  0  &  \cdots  &  0  &0 &0 &\cdots &0 \\
0  &   0   &   \cdots  &    0   &   0    &  0  & \cdots &  B_{12}  &  0&0   &  0  &  \cdots  &  0 &0 &0 &\cdots &0  \\
0  &   0   &   \cdots  &    0   &   0    &  0  & \cdots  &  0  &  0   &0&  0  &  \cdots  &  0  &B_{13} &0 &\cdots &0 \\
0  &   0   &   \cdots  &    0   &   0    &  0  & \cdots  &  0  &  0   &0&  0  &  \cdots  &  0 &0  &B_{14} &\cdots &0 \\
0  &   0   &   \cdots  &    0   &   0    &  0  & \cdots  &  0  &  0   &0 &  0  &  \cdots  &  0 &0 &0  &\ddots &0 \\
0  &   0   &   \cdots  &    0   &   0    &  0  & \cdots &  0  &  0   &0&  0  &  \cdots  &  0 &0 &0 &\cdots  & B_{15} \\
\end{array}
\right]
\]
where $B_1 = J_{1,n_{(j,\beta)}}$,
$B_2=J_{n_{(1,1)},n_{(j,\beta)}}$,
$B_3 = J_{n_{(j-1,p_{j-1}-1)},n_{(j,\beta)}}$,

$B_4 = J_{n_{(j,\beta)},1}$,
$B_5 = J_{n_{(j,\beta)},n_{(1,1)}}$,
$B_6 = J_{n_{(j,\beta)},n_{(j-1,p_{j-1}-1)}}$,

$B_7 = (A_{(j,\beta)})_{(j,1)(j,\beta+1 )}$,
$B_8 = (A_{(j,\beta)})_{(j,2)(j,\beta+2)}$,
$B_9 = (A_{(j,\beta)})_{(j,p_{j}-1- \beta)(j,p_{j}-1)}$,

$B_{10} = (A_{(j,\beta)})_{(j,p_{j}+1- \beta)(j,1)}$,
$B_{11}=(A_{(j,\beta)})_{(j,p_{j}+2- \beta)(j,2)}$,
$B_{12}=(A_{(j,\beta)})_{(j,p_{j}-1)(j,\beta-1)}$,

$B_{13}=(A_{(j,\beta)})_{(j+1,1)(j+1,1 )}$,
$B_{14}=(A_{(j,\beta)})_{(j+1,2)(j+1,2 )}$,
$B_{15}=(A_{(j,\beta)})_{(d,p_d -1)(d,p_d -1)}$.
\end{lem}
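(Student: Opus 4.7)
The plan is to decompose $A_{(j,\beta)}$ using the resolution of identity $I = \sum_{(i,\alpha)} E_{(i,\alpha)}^*$, which yields
\[
A_{(j,\beta)} = \sum_{(i,\alpha),(h,\xi)} E_{(i,\alpha)}^* A_{(j,\beta)} E_{(h,\xi)}^*,
\]
and then to identify each block $(A_{(j,\beta)})_{(i,\alpha)(h,\xi)} = E_{(i,\alpha)}^* A_{(j,\beta)} E_{(h,\xi)}^*$ separately. Theorem~\ref{thm:nonzero} gives the complete list of nonzero blocks, so the first step is to match the zero pattern of the displayed matrix against the five cases of that theorem, verifying that the positions of the $B_k$'s in the lemma are precisely the positions allowed by Theorem~\ref{thm:nonzero}.

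For the blocks $B_1,\ldots,B_3$ (case (5) of Theorem~\ref{thm:nonzero}: $i<j$, $\xi=\beta$) and $B_4,\ldots,B_6$ (case (3): $h<j$, $\alpha=p_j-\beta$), the plan is to invoke Lemma~\ref{lem:basic2}(2) to show that every entry equals $1$. Fixing the base vertex $x$, a row of $E_{(i,\alpha)}^* A_{(j,\beta)} E_{(j,\beta)}^*$ corresponds to $y\in R_{(i,\alpha)}(x)$ and a column to $z\in R_{(j,\beta)}(x)$. Since $i<j$, Lemma~\ref{lem:basic2}(2) (with the roles of $y$ and $z$ swapped) forces $(y,z)\in R_{(j,\beta)}$, so $(A_{(j,\beta)})_{yz}=1$ uniformly and the block equals $J_{n_{(i,\alpha)},n_{(j,\beta)}}$. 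A symmetric argument, using that $R_{(j,p_j-\beta)}$ is the inverse relation of $R_{(j,\beta)}$ (because the relations of $C_{p_j}$ form a cyclic group), handles $B_4,\ldots,B_6$ and produces the corresponding all-ones matrices.

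For the remaining nonzero blocks no further computation is required. The blocks $B_{13},\ldots,B_{15}$ (case (4): $j<i$, $\alpha=\gamma$) are by definition the diagonal sub-blocks of $A_{(j,\beta)}$ restricted to $R_{(i,\alpha)}(x)\times R_{(i,\alpha)}(x)$; by Lemma~\ref{lem:basic1} these live inside the induced sub-scheme $C_{p_1}\wr\cdots\wr C_{p_{i-1}}$ and are labelled as such in the statement. The blocks $B_7,\ldots,B_{12}$ (case (2) with $i=j=h$) are likewise the appropriate labelled $((j,\alpha),(j,\alpha+\beta \bmod p_j))$-sub-blocks, split into the two halves $\alpha\in\{1,\ldots,p_j-1-\beta\}$ with $\alpha+\beta<p_j$ (giving $B_7,\ldots,B_9$) and $\alpha\in\{p_j+1-\beta,\ldots,p_j-1\}$ with $\alpha+\beta>p_j$ (giving $B_{10},\ldots,B_{12}$).

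The main obstacle is not mathematical depth but bookkeeping: one must match every row and column index $(i,\alpha)$ appearing in the displayed matrix to the correct case of Theorem~\ref{thm:nonzero} and the correct cyclic shift modulo $p_j$. The ``gap'' in the middle row-block at $\alpha=p_j-\beta$, where case (2) fails because $\alpha+\beta\equiv 0\pmod{p_j}$, is exactly the row that hosts $B_4,\ldots,B_6$ coming from case (3); verifying this alignment, and similarly verifying that the column position of $B_1,B_2,B_3$ coincides with the unique allowed column $(j,\beta)$, is the most delicate step but is ultimately a direct consequence of the case distinctions of Theorem~\ref{thm:nonzero}.
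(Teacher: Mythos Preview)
Your approach is essentially the same as the paper's: both decompose $A_{(j,\beta)}$ via the dual idempotents and invoke Theorem~\ref{thm:nonzero} to locate the nonzero blocks, splitting into the cases $i<j$, $i=j$, $i>j$ (equivalently, cases (5), (2)/(3), (4) of that theorem). Your write-up is in fact slightly more complete than the paper's, since you explicitly justify via Lemma~\ref{lem:basic2}(2) why the blocks $B_1,\ldots,B_6$ are all-ones matrices, a point the paper leaves implicit.
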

\begin{proof}
We may assume that $(j,\beta) \neq 0$.
We will consider block matrices of $A_{(j, \beta)}$.

If $i < j$, then $E_{(i,\alpha)}^* A_{(j,\beta)} E_{(h,\xi)}^* \neq 0$ if and only if $j=h$ and $\beta=\xi$ by Theorem \ref{thm:nonzero}(5).
This implies that $(A_{(j,\beta)})_{1(j,\beta)}, (A_{(j,\beta)})_{2(j,\beta)}, \dotsc, (A_{(j,\beta)})_{(j-1,p_{j-1}-1)(j,\beta)}$ are nonzero.

If $i = j$, then $E_{(i,\alpha)}^* A_{(j,\beta)} E_{(h,\xi)}^* \neq 0$ if and only if either $h=i$ and $\alpha+\beta \equiv \xi$ (mod $p_j$) or
$h<i$ and $\alpha+\beta \equiv 0$ (mod $p_j$) by Theorem \ref{thm:nonzero}(2) and \ref{thm:nonzero}(3).
This implies that $B_7, B_8, \dotsc, B_9, B_{10}, B_{11}, \dotsc, B_{12}, B_4, B_5, \dotsc, B_6$ are nonzero.

If $i > j$, then $E_{(i,\alpha)}^* A_{(j,\beta)} E_{(h,\xi)}^* \neq 0$ if and only if $h=i$ and $\alpha=\xi$ by Theorem \ref{thm:nonzero}(4).
This implies that $B_{13}, B_{14}, \dotsc, B_{15}$ are nonzero.
\end{proof}

We define the following matrices $G_{(i,\alpha)(j,\beta)}$ for all $(i,\alpha), (j,\beta) \in [\Sigma_{i=1} ^d (p_i -1)]$.
Let
\[G_{(i,\alpha)(j,\beta)} := \left\{
                    \begin{array}{ll}
                     \frac{1}{n_{(j,\beta)}} E_{(i,\alpha)}^* A_{(j,\beta)} E_{(j,\beta)}^*  & \hbox{if $i < j$}; \\
                     \frac{1}{n_{(j,\beta)}} E_{(i,\alpha)}^* A_{(i,\alpha)}^t E_{(j,\beta)}^* & \hbox{if $i > j$}; \\
                     \frac{1}{n_{(i,\alpha)}} E_{(i,\alpha)}^* J E_{(i,\beta)}^* & \hbox{if $i=j$}.
                    \end{array}
                  \right.\]
It is easy to see that the $((i,\alpha), (j,\beta))$-block of $G_{(i,\alpha)(j,\beta)}$ is $\frac{1}{n_{(j,\beta)}} J_{n_{(i,\alpha)}, n_{(j,\beta)}}$ and $\{G_{(i,\alpha)(j,\beta)} | (i,\alpha), (j,\beta) \in [\Sigma_{i=1} ^d (p_i -1)]\}$ is a linearly independent subset of $\mathcal{T}(x)$. Let $\mathcal{U}$ be the $\mathbb{C}$-subspace of $\mathcal{T}(x)$ generated by $\{G_{(i,\alpha)(j,\beta)} | (i,\alpha), (j,\beta) \in [\Sigma_{i=1} ^d (p_i -1)]\}$. Then its dimension is $(1+ \Sigma_{i=1} ^d (p_i -1))^2$.

\begin{lem}\label{lem:f2}
For any $(i,\alpha), (j,\beta), (l,\gamma), (m,\delta) \in [\Sigma_{i=1} ^d (p_i -1)]$,
\[G_{(i,\alpha)(j,\beta)} G_{(l,\gamma)(m,\delta)} = \delta_{(j,\beta)(l,\gamma)} G_{(i,\alpha)(m,\delta)}.\]
\end{lem}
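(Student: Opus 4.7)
The plan is to observe that each generator $G_{(i,\alpha)(j,\beta)}$, regarded as a matrix partitioned into blocks as in Lemma \ref{lem:f1}, has a single nonzero block --- precisely the $((i,\alpha),(j,\beta))$-block, equal to $\frac{1}{n_{(j,\beta)}} J_{n_{(i,\alpha)},n_{(j,\beta)}}$. Once this block description is in hand, the identity $G_{(i,\alpha)(j,\beta)} G_{(l,\gamma)(m,\delta)} = \delta_{(j,\beta)(l,\gamma)} G_{(i,\alpha)(m,\delta)}$ falls out of blockwise multiplication together with the elementary identity $J_{a,b}\,J_{b,c} = b\,J_{a,c}$.

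First I would verify the block description, which is already asserted just before the statement of the lemma. Since $E^*_{(i,\alpha)}$ is diagonal with support on the rows indexed by $R_{(i,\alpha)}(x)$, any product of the form $E^*_{(i,\alpha)} X E^*_{(j,\beta)}$ annihilates every block outside the $((i,\alpha),(j,\beta))$ position, so only one block can be nonzero in each $G_{(i,\alpha)(j,\beta)}$. To identify the surviving block in each of the three clauses of the definition I would appeal directly to Lemma \ref{lem:f1}: when $i<j$, the $((i,\alpha),(j,\beta))$-block of $A_{(j,\beta)}$ is an all-ones matrix of type $B_1$, $B_2$ or $B_3$, and dividing by $n_{(j,\beta)}$ gives the claim; when $i>j$, the $((i,\alpha),(j,\beta))$-block of $A_{(i,\alpha)}^t$ is the transpose of a $B_4$, $B_5$ or $B_6$ entry, again an all-ones matrix; when $i=j$, the block of $J$ is obviously all-ones, and the prefactor $1/n_{(i,\alpha)}$ agrees with $1/n_{(j,\beta)}$ because the valencies in the wreath of cyclic schemes depend only on the first index (explicitly, $n_{(i,\gamma)} = p_1 p_2 \cdots p_{i-1}$ for $i\geq 2$ and $n_{(1,\gamma)}=1$).

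With this in place, the main computation is immediate. The $((r,\rho),(s,\sigma))$-block of $G_{(i,\alpha)(j,\beta)} G_{(l,\gamma)(m,\delta)}$ is
\[
\sum_{(t,\tau)} \bigl[G_{(i,\alpha)(j,\beta)}\bigr]_{(r,\rho)(t,\tau)} \bigl[G_{(l,\gamma)(m,\delta)}\bigr]_{(t,\tau)(s,\sigma)},
\]
and by the single-block description this sum is forced to vanish unless $(r,\rho)=(i,\alpha)$, $(t,\tau)=(j,\beta)=(l,\gamma)$ and $(s,\sigma)=(m,\delta)$. Hence the product is zero when $(j,\beta)\neq(l,\gamma)$, and when $(j,\beta)=(l,\gamma)$ the surviving block is
\[
\frac{1}{n_{(j,\beta)}} J_{n_{(i,\alpha)},n_{(j,\beta)}} \cdot \frac{1}{n_{(m,\delta)}} J_{n_{(j,\beta)},n_{(m,\delta)}} = \frac{1}{n_{(m,\delta)}} J_{n_{(i,\alpha)},n_{(m,\delta)}},
\]
which is exactly the nonzero block of $G_{(i,\alpha)(m,\delta)}$.

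The main obstacle is essentially bookkeeping: one must confirm that the three clauses of the definition of $G$ really do produce the same normalized block shape $\frac{1}{n_{(j,\beta)}} J_{n_{(i,\alpha)},n_{(j,\beta)}}$, and in particular that the scalar $1/n_{(i,\alpha)}$ chosen in the diagonal case $i=j$ coincides with $1/n_{(j,\beta)}$ via the valency identity $n_{(i,\alpha)}=n_{(i,\beta)}$. After that, the lemma is a one-line block multiplication.
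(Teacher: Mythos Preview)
Your proof is correct and follows essentially the same route as the paper: both arguments rest on the fact (stated just before the lemma) that the only nonzero block of $G_{(i,\alpha)(j,\beta)}$ is the $((i,\alpha),(j,\beta))$-block $\frac{1}{n_{(j,\beta)}}J_{n_{(i,\alpha)},n_{(j,\beta)}}$, and then finish with the block computation $\frac{1}{n_{(j,\beta)}}J_{n_{(i,\alpha)},n_{(j,\beta)}}\cdot\frac{1}{n_{(m,\delta)}}J_{n_{(j,\beta)},n_{(m,\delta)}}=\frac{1}{n_{(m,\delta)}}J_{n_{(i,\alpha)},n_{(m,\delta)}}$. The only difference is that you spell out in more detail why each clause of the definition yields that block (including the valency identity $n_{(i,\alpha)}=n_{(i,\beta)}$), whereas the paper simply records this fact prior to the lemma and proceeds directly to the multiplication.
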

\begin{proof}
It is enough to show that $G_{(i,\alpha)(j,\beta)} G_{(j,\beta)(m,\delta)} =  G_{(i,\alpha)(m,\delta)}$.
Since the $((i,\alpha),(j,\beta))$-block of $G_{(i,\alpha)(j,\beta)}$ and the $((j,\beta),(m,\delta))$-block of $G_{(j,\beta)(m,\delta)}$
are the only nonzero block, every block except for the $((i,\alpha), (m,\delta))$-block of $G_{(i,\alpha)(j,\beta)} G_{(j,\beta)(m,\delta)}$ is zero.
Thus, the $((i,\alpha), (m,\delta))$-block of $G_{(i,\alpha)(j,\beta)} G_{(j,\beta)(m,\delta)}$ is $\frac{1}{n_{(j,\beta)}} J_{n_{(i,\alpha)}, n_{(j,\beta)}}$ $\frac{1}{n_{(m,\delta)}} J_{n_{(j,\beta)}, n_{(m,\delta)}}$ $= \frac{1}{n_{(m,\delta)}} J_{n_{(i,\alpha)}, n_{(m,\delta)}}$. Thus, $G_{(i,\alpha)(j,\beta)} G_{(j,\beta)(m,\delta)} = G_{(i,\alpha)(m,\delta)}$.
\end{proof}

\begin{lem}\label{lem:f3}
Let $(h,\xi), (i,\alpha), (j,\beta) \in [\Sigma_{i=1} ^d (p_i -1)]$, the following hold.
\begin{enumerate}
\item[(1)] If $h < i$, then $A_{(h,\xi)} G_{(i,\alpha)(j,\beta)} = E_{(i,\alpha)}^* A_{(h,\xi)}$ $E_{(i,\alpha)}^* G_{(i,\alpha)(j,\beta)}$.
\item[(2)] If $h = i$ and $\alpha = \xi$, then $A_{(h,\xi)} G_{(i,\alpha)(j,\beta)} =\sum_{r=0} ^{(i-1,p_{i-1}-1)}$ $E_{r}^* A_{(i,\alpha)}$
 $E_{(i,\alpha)}^* G_{(i,\alpha)(j,\beta)}$.
\item[(3)] If $h = i$ and $\alpha \neq \xi$, then $A_{(h,\xi)} G_{(i,\alpha)(j,\beta)} = E_{(i,\rho)}^* A_{(i,\xi)} E_{(i,\alpha)}^* G_{(i,\alpha)(j,\beta)}$ for $\rho + \xi \equiv \alpha$ (mod $p_i$).
\item[(4)] If $h > i$, then $A_{(h,\xi)} G_{(i,\alpha)(j,\beta)} = E_{(h,p_h - \xi)}^* A_{(h,\xi)} E_{(i,\alpha)}^* G_{(i,\alpha)(j,\beta)}$.
\item[(5)] If $h < j$, then $G_{(i,\alpha)(j,\beta)} A_{(h,\xi)} = G_{(i,\alpha)(j,\beta)} E_{(j,\beta)}^* A_{(h,\xi)} E_{(j,\beta)}^*$.
\item[(6)] If $h = j$ and $\beta + \xi \equiv \rho$ (mod $p_j$), then $G_{(i,\alpha)(j,\beta)} A_{(h,\xi)} = G_{(i,\alpha)(j,\beta)} E_{(j,\beta)}^*$ $A_{(j,\xi)} E_{(j,\rho)}^*$.
\item[(7)] If $h = j$ and $\beta + \xi \equiv 0$ (mod $p_j$), then $G_{(i,\alpha)(j,\beta)} A_{(h,\xi)} = \sum_{r=0} ^{(j-1,p_{j-1}-1)}$ $G_{(i,\alpha)(j,\beta)} E_{(j,\beta)}^* A_{(j,\xi)} E_{r}^*$.
\item[(8)] If $h > j$, then $G_{(i,\alpha)(j,\beta)} A_{(h,\xi)} = G_{(i,\alpha)(j,\beta)} E_{(j,\beta)}^*A_{(h,\xi)}E_{(h,\xi)}^*$.
\end{enumerate}
\end{lem}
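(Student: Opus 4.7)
The plan is to reduce every product appearing in the lemma to a single triple product of the form $E_r^{*}A_{(h,\xi)}E_s^{*}$, whose nonvanishing is controlled by Lemma~\ref{lem:inter} and Theorem~\ref{thm:nonzero}, and then to organize the eight identities according to how $h$ compares with $i$ (for parts~(1)--(4)) or with $j$ (for parts~(5)--(8)).

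The starting observation is that, by its very definition, $G_{(i,\alpha)(j,\beta)}$ is supported on the single $((i,\alpha),(j,\beta))$-block, so
\[G_{(i,\alpha)(j,\beta)} = E_{(i,\alpha)}^{*} G_{(i,\alpha)(j,\beta)} E_{(j,\beta)}^{*}.\]
Inserting $\sum_r E_r^{*}=I$ on the appropriate side gives
\[A_{(h,\xi)} G_{(i,\alpha)(j,\beta)} = \sum_r E_r^{*} A_{(h,\xi)} E_{(i,\alpha)}^{*} G_{(i,\alpha)(j,\beta)},\]
and symmetrically
\[G_{(i,\alpha)(j,\beta)} A_{(h,\xi)} = \sum_s G_{(i,\alpha)(j,\beta)} E_{(j,\beta)}^{*} A_{(h,\xi)} E_s^{*}.\]
Thus each of the eight identities reduces to pinning down the indices $r$, respectively $s$, that make $E_r^{*} A_{(h,\xi)} E_{(i,\alpha)}^{*}$, respectively $E_{(j,\beta)}^{*} A_{(h,\xi)} E_s^{*}$, nonzero, and this is exactly the content of Theorem~\ref{thm:nonzero} (combined with Lemma~\ref{lem:inter}).

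To finish I would match each regime against the five nonvanishing patterns in Theorem~\ref{thm:nonzero}. For regime~(1), $h<i$: only item~(4) of Theorem~\ref{thm:nonzero} can apply, and it forces $r=(i,\alpha)$. For regime~(2), $h=i$ with $\xi=\alpha$: only item~(5) contributes, so $r$ ranges over every index at a level strictly below $i$, which is exactly the displayed summation. For regime~(3), $h=i$ with $\xi\neq\alpha$: only item~(2) survives, and the congruence $\rho+\xi\equiv\alpha\pmod{p_i}$ singles out the unique $r=(i,\rho)$. For regime~(4), $h>i$: only item~(3) applies, and $r_1+\xi\equiv 0\pmod{p_h}$ forces $r=(h,p_h-\xi)$. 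Parts~(5)--(8) are then handled by the mirror argument on the right factor $E_{(j,\beta)}^{*}A_{(h,\xi)}E_s^{*}$, corresponding respectively to the regimes $h<j$, $h=j$ with $\beta+\xi\not\equiv 0\pmod{p_j}$, $h=j$ with $\beta+\xi\equiv 0\pmod{p_j}$, and $h>j$.

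The delicate point is the bookkeeping: in each of the eight regimes one must verify that exactly one item of Theorem~\ref{thm:nonzero} contributes and that all of the remaining four are excluded by the level inequalities or the modular conditions. The sum-type conclusions in~(2) and~(7), where several $r$ or $s$ survive, are the subtlest, because one has to make sure that no stray contribution from items~(2) or~(3) of Theorem~\ref{thm:nonzero} leaks in under the equality $\xi=\alpha$ or $\beta+\xi\equiv 0$; once this check is in place, every identity follows by direct substitution.
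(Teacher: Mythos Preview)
Your approach is correct and essentially identical to the paper's. The paper also uses the factorization $G_{(i,\alpha)(j,\beta)} = E_{(i,\alpha)}^{*} G_{(i,\alpha)(j,\beta)} E_{(j,\beta)}^{*}$ and then, for each regime, cites the single relevant item of Theorem~\ref{thm:nonzero} to pick out the surviving $E_r^{*}$ (or $E_s^{*}$); your version merely makes the insertion of $\sum_r E_r^{*}=I$ and the exclusion of the other items of Theorem~\ref{thm:nonzero} more explicit.
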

\begin{proof}
(1) By Theorem \ref{thm:nonzero}(4),
\begin{align*}
A_{(h,\xi)} G_{(i,\alpha)(j,\beta)} &= A_{(h,\xi)} E_{(i,\alpha)}^* G_{(i,\alpha)(j,\beta)}\\
 &= E_{(i,\alpha)}^* A_{(h,\xi)} E_{(i,\alpha)}^* G_{(i,\alpha)(j,\beta)}.
\end{align*}
(2) By Theorem \ref{thm:nonzero}(5),
\begin{align*}
A_{(h,\xi)} G_{(i,\alpha)(j,\beta)} &= A_{(h,\xi)} E_{(i,\alpha)}^* G_{(i,\alpha)(j,\beta)}\\
 &= \sum_{r=0} ^{(i-1,p_{i-1}-1)} E_{r}^* A_{(h,\xi)} E_{(i,\alpha)}^* G_{(i,\alpha)(j,\beta)}.
\end{align*}
(3) By Theorem \ref{thm:nonzero}(2),
\begin{align*}
A_{(h,\xi)} G_{(i,\alpha)(j,\beta)} &= A_{(h,\xi)} E_{(i,\alpha)}^* G_{(i,\alpha)(j,\beta)} \\
&= E_{(i,\rho)}^* A_{(i,\xi)} E_{(i,\alpha)}^* G_{(i,\alpha)(j,\beta)}
\end{align*}
for $\rho + \xi \equiv \alpha$ (mod $p_i$).

(4) By Theorem \ref{thm:nonzero}(3),
\begin{align*}
A_{(h,\xi)} G_{(i,\alpha)(j,\beta)} & = A_{(h,\xi)} E_{(i,\alpha)}^* G_{(i,\alpha)(j,\beta)} \\
&= E_{(h,p_h - \xi)}^* A_{(h,\xi)} E_{(i,\alpha)}^* G_{(i,\alpha)(j,\beta)}
\end{align*}

(5) By Theorem \ref{thm:nonzero}(4),
\begin{align*}
G_{(i,\alpha)(j,\beta)} A_{(h,\xi)} &= G_{(i,\alpha)(j,\beta)} E_{(j,\beta)}^* A_{(h,\xi)} \\
&= G_{(i,\alpha)(j,\beta)} E_{(j,\beta)}^* A_{(h,\xi)} E_{(j,\beta)}^*.
\end{align*}
(6) By Theorem \ref{thm:nonzero}(2),
\begin{align*}
G_{(i,\alpha)(j,\beta)} A_{(h,\xi)} &= G_{(i,\alpha)(j,\beta)} E_{(j,\beta)}^* A_{(j,\xi)} \\
&= G_{(i,\alpha)(j,\beta)} E_{(j,\beta)}^* A_{(j,\xi)} E_{(j,\rho)}^*
\end{align*}
for $\beta + \xi \equiv \rho$ (mod $p_j$).

(7) By Theorem \ref{thm:nonzero}(3),
\begin{align*}
G_{(i,\alpha)(j,\beta)} A_{(h,\xi)} &= G_{(i,\alpha)(j,\beta)} E_{(j,\beta)}^* A_{(j,\xi)} \\
&= \sum_{r=0} ^{(j-1,p_{j-1}-1)} G_{(i,\alpha)(j,\beta)} E_{(j,\beta)}^* A_{(j,\xi)} E_{r}^*.
\end{align*}

(8) By Theorem \ref{thm:nonzero}(5),
\begin{align*}
G_{(i,\alpha)(j,\beta)} A_{(h,\xi)} &=G_{(i,\alpha)(j,\beta)} E_{(j,\beta)}^*A_{(h,\xi)} \\
&= G_{(i,\alpha)(j,\beta)} E_{(j,\beta)}^*A_{(h,\xi)}E_{(h,\xi)}^*.
\end{align*}
\end{proof}

\begin{lem}\label{lem:f5}
Let $(h,\xi), (i,\alpha), (j,\beta) \in [\Sigma_{i=1} ^d (p_i -1)]$, the following hold.
\[A_{(h,\xi)} G_{(i,\alpha)(j,\beta)} = \left\{
                                         \begin{array}{ll}
                                          n_{(h,\xi)} G_{(i,\alpha)(j,\beta)}   & \hbox{if $h < i$}; \\
                                          \Sigma_{r=0} ^{(i-1,p_{i-1}-1)} n_{(i,\alpha)} G_{r(j,\beta)}    & \hbox{if $h = i$ and $\alpha = \xi$}; \\
                                          n_{(i,\alpha)} G_{(i,\alpha - \xi)(j,\beta)}    & \hbox{if $h = i$ and $\alpha \neq \xi $}; \\
                                          n_{(i,\alpha)} G_{(h,p_h -\xi)(j,\beta)}     & \hbox{if $h > i$ }. \\

                                       \end{array}
                                       \right.\]

\[G_{(i,\alpha)(j,\beta)} A_{(h,\xi)}  = \left\{
                                         \begin{array}{ll}
                                          n_{(h,\xi)} G_{(i,\alpha)(j,\beta)}   & \hbox{if $h < j$}; \\
                                          n_{(j,\rho)} G_{(i,\alpha)(j,\rho)}    & \hbox{if $h = j$ and $\beta + \xi \equiv \rho$ (mod $p_j$)}; \\
                                          \Sigma_{r=0} ^{(j-1,p_{j-1}-1)} n_r G_{(i,\alpha)r}    & \hbox{if $h = j$ and $\beta + \xi \equiv 0$ (mod $p_j$)}; \\
                                          n_{(h,\xi)} G_{(i,\alpha)(h,\xi)}     & \hbox{if $h > j$}.
                                       \end{array}
                                       \right.\]
\end{lem}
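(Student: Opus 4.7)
The plan is to verify each of the eight subcases by reducing the product to a single block computation. Lemma~\ref{lem:f3} already rewrites $A_{(h,\xi)} G_{(i,\alpha)(j,\beta)}$ (and its right-multiplication counterpart) as an expression of the form $E_r^* A_{(\cdot,\cdot)} E_{(i,\alpha)}^* \cdot G_{(i,\alpha)(j,\beta)}$, possibly summed over $r$. Since the only nonzero block of $G_{(i,\alpha)(j,\beta)}$ is $\frac{1}{n_{(j,\beta)}} J_{n_{(i,\alpha)}, n_{(j,\beta)}}$ at position $((i,\alpha),(j,\beta))$, the computation collapses to multiplying a single specific block of $A_{(h,\xi)}$ by this all-ones block and re-packaging the result as a scalar multiple of some $G$.

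First I would dispose of the case $h < i$. Here the relevant block $(A_{(h,\xi)})_{(i,\alpha),(i,\alpha)}$ is, by Lemma~\ref{lem:basic1}, the adjacency matrix of $R_{(h,\xi)}$ in the induced subscheme $C_{p_1}\wr\cdots\wr C_{p_{i-1}}$ on $R_{(i,\alpha)}(x)$; this is a regular relation of valency $n_{(h,\xi)}$, so every row sum equals $n_{(h,\xi)}$, giving $M\cdot J = n_{(h,\xi)} J$ and hence $A_{(h,\xi)} G_{(i,\alpha)(j,\beta)} = n_{(h,\xi)} G_{(i,\alpha)(j,\beta)}$. For the remaining three cases ($h = i$ with $\alpha = \xi$, $h = i$ with $\alpha \neq \xi$, and $h > i$), I would verify that the relevant blocks of $A_{(h,\xi)}$ are themselves all-ones matrices. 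Two of these can be read off directly from Lemma~\ref{lem:f1} (the blocks $B_2,\dots,B_3$ in column $(j,\beta)$ and the diagonal-shifted blocks $B_7,\dots,B_{12}$ are explicitly $J$-matrices), and the third is a direct consequence of the wreath-product structure: for $y \in R_{(i,\rho)}(x)$ and any $z \in R_{(i,\alpha)}(x)$ with $\rho+\xi \equiv \alpha \pmod{p_i}$, one has $(y,z) \in R_{(i,\xi)}$ automatically, because the coordinates in positions below level $i$ are unconstrained. The $h > i$ case is analogous: every pair $(y,z) \in R_{(h,p_h-\xi)}(x) \times R_{(i,\alpha)}(x)$ lies in $R_{(h,\xi)}$ for the same reason.

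With these blocks identified as $J$-matrices, each product collapses to $\frac{1}{n_{(j,\beta)}} J \cdot J$, which equals a scalar multiple of the nonzero block of the appropriate $G_{(\cdot)(j,\beta)}$; the scalar is precisely what the statement predicts once the row/column dimensions are tracked. The right-multiplication formulas follow by the symmetric argument using Lemma~\ref{lem:f3}(5)--(8), combined with the same block analysis of $A_{(h,\xi)}$ (now viewed through its column blocks in Lemma~\ref{lem:f1}). The main obstacle is simply the case bookkeeping: eight subcases in total, each requiring the correct identification of which block of $A_{(h,\xi)}$ appears and careful tracking of the valency factors $n_{(i,\alpha)}$, $n_{(j,\beta)}$, $n_r$ that arise from block sizes and from the normalizations in the definition of $G$. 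No substantive new combinatorial input beyond Lemmas~\ref{lem:basic1}, \ref{lem:f1}, and \ref{lem:f3} is required.
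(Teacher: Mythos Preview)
Your proposal is correct and follows essentially the same route as the paper: reduce via Lemma~\ref{lem:f3} to a single relevant block of $A_{(h,\xi)}$, identify that block (either as a valency-$n_{(h,\xi)}$ regular matrix or as an all-ones matrix), and read off the answer. The only presentational difference is that the paper, after identifying each block as all-ones, recognizes the resulting $E_{r}^{*}A_{(\cdot,\cdot)}E_{(i,\alpha)}^{*}$ as $n_{(i,\alpha)}G_{r(i,\alpha)}$ and then invokes Lemma~\ref{lem:f2} to contract $G_{r(i,\alpha)}G_{(i,\alpha)(j,\beta)}=G_{r(j,\beta)}$, whereas you compute the block product $J\cdot J$ directly; these are the same computation. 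One small caution: Lemma~\ref{lem:f1} does not \emph{explicitly} state that the diagonal-shifted blocks $B_{7},\dots,B_{12}$ are all-ones matrices, so you will need the short wreath-product argument you already sketched (or equivalently the observation $E_{(i,\alpha-\xi)}^{*}A_{(i,\xi)}E_{(i,\alpha)}^{*}=E_{(i,\alpha-\xi)}^{*}JE_{(i,\alpha)}^{*}$, which the paper uses without comment) rather than a bare citation of Lemma~\ref{lem:f1}.
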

\begin{proof}
If $h < i$, then $A_{(h,\xi)} G_{(i,\alpha)(j,\beta)} = E_{(i,\alpha)}^* A_{(h,\xi)} E_{(i,\alpha)}^* G_{(i,\alpha)(j,\beta)}$ by Lemma \ref{lem:f3}(1). From Lemma \ref{lem:f1}, $((i,\alpha),(i,\alpha))$-block of $E_{(i,\alpha)}^* A_{(h,\xi)} E_{(i,\alpha)}^*$ is $(A_{(h,\xi)})_{(i,\alpha)(i,\alpha)}$ and any other block of $E_{(i,\alpha)}^* A_{(h,\xi)} E_{(i,\alpha)}^*$ is zero.
Thus, $E_{(i,\alpha)}^* A_{(h,\xi)} E_{(i,\alpha)}^* G_{(i,\alpha)(j,\beta)} = n_{(h,\xi)} G_{(i,\alpha)(j,\beta)}$.

If $h = i$ and $\alpha = \xi$, then by Lemma \ref{lem:f3}(2) and \ref{lem:f2},
\begin{align*}
A_{(h,\xi)} G_{(i,\alpha)(j,\beta)} &= \Sigma_{r=0} ^{(i-1,p_{i-1} -1)} E_r ^* A_{(i,\alpha)} E_{(i,\alpha)}^* G_{(i,\alpha)(j,\beta)}\\
 & = \Sigma_{r=0} ^{(i-1,p_{i-1} -1)} n_{(i,\alpha)} G_{r(i,\alpha)} G_{(i,\alpha)(j,\beta)} \\
 & = \Sigma_{r=0} ^{(i-1,p_{i-1} -1)} n_{(i,\alpha)} G_{r(j,\beta)}.
\end{align*}

If $h = i$ and $\alpha \neq \xi$, then by Lemma \ref{lem:f3}(3) and \ref{lem:f2},
\begin{align*}
A_{(h,\xi)} G_{(i,\alpha)(j,\beta)} & = A_{(h,\xi)} E_{(i,\alpha)}^* G_{(i,\alpha)(j,\beta)} \\
 & = E_{(i,\alpha - \xi)}^* J E_{(i,\alpha)}^* G_{(i,\alpha)(j,\beta)} \\
 & = n_{(i,\alpha)} G_{(i,\alpha - \xi)(i,\alpha)} G_{(i,\alpha)(j,\beta)} = n_{(i,\alpha)} G_{(i,\alpha - \xi)(j,\beta)}.
\end{align*}

If $h > i$, then by Lemma \ref{lem:f3}(4) and \ref{lem:f2},
\begin{align*}
A_{(h,\xi)} G_{(i,\alpha)(j,\beta)} & = A_{(h,\xi)} E_{(i,\alpha)}^* G_{(i,\alpha)(j,\beta)} \\
& = E_{(h,p_h - \xi)}^* A_{(h,\xi)} E_{(i,\alpha)}^* G_{(i,\alpha)(j,\beta)} \\
& = n_{(i,\alpha)} G_{(h,p_h - \xi)(i,\alpha)} G_{(i,\alpha)(j,\beta)} = n_{(i,\alpha)} G_{(h,p_h - \xi)(j,\beta)}.
\end{align*}

The case of $G_{(i,\alpha)(j,\beta)} A_{(h,\xi)}$ is similar.
\end{proof}

\begin{lem}\label{lem:f6}
For any $(i,\alpha), (j,\beta), (h,\xi) \in [\Sigma_{i=1} ^d (p_i -1)]$ such that $j, h \in [i-1]$,
\begin{enumerate}
\item[(1)] $E_{(i,\alpha)}^*A_{(j,\beta)} = A_{(j,\beta)}E_{(i,\alpha)}^*$.
\item[(2)] $E_{(i,\alpha)}^*A_{(j,\beta)}E_{(i,\alpha)}^*E_{(i,\alpha)}^*A_{(h,\xi)}E_{(i,\alpha)}^*=E_{(i,\alpha)}^*A_{(j,\beta)}A_{(h,\xi)}E_{(i,\alpha)}^*$.
\end{enumerate}
\end{lem}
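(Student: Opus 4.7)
The plan is to derive part~(1) directly from Theorem~\ref{thm:nonzero} by resolving the identity on either side of $A_{(j,\beta)}$, and then to obtain part~(2) as an essentially formal consequence of (1) together with the idempotency $E_{(i,\alpha)}^* E_{(i,\alpha)}^* = E_{(i,\alpha)}^*$.

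For (1), I would first expand
\[
E_{(i,\alpha)}^* A_{(j,\beta)} = \sum_{(m,\mu)} E_{(i,\alpha)}^* A_{(j,\beta)} E_{(m,\mu)}^*
\]
using $\sum_{(m,\mu)} E_{(m,\mu)}^* = I$, and then run through the five cases of Theorem~\ref{thm:nonzero}. With the first index of the block fixed at $(i,\alpha)$ and the $A$-index $j$ strictly less than $i$, cases~(1)--(3) and (5) are all incompatible, so only case~(4) can contribute, and it forces $(m,\mu)=(i,\alpha)$. Thus $E_{(i,\alpha)}^* A_{(j,\beta)} = E_{(i,\alpha)}^* A_{(j,\beta)} E_{(i,\alpha)}^*$. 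An analogous expansion $A_{(j,\beta)} E_{(i,\alpha)}^* = \sum_{(l,\lambda)} E_{(l,\lambda)}^* A_{(j,\beta)} E_{(i,\alpha)}^*$ isolates the same single block by the same case analysis, so both expressions are equal.

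For (2), I would first collapse the adjacent $E_{(i,\alpha)}^* E_{(i,\alpha)}^*$ on the left-hand side to obtain $E_{(i,\alpha)}^* A_{(j,\beta)} E_{(i,\alpha)}^* A_{(h,\xi)} E_{(i,\alpha)}^*$. Applying (1) to the factor $E_{(i,\alpha)}^* A_{(h,\xi)}$, which is legitimate since $h < i$, slides the middle idempotent past $A_{(h,\xi)}$, after which a second collapse $E_{(i,\alpha)}^* E_{(i,\alpha)}^* = E_{(i,\alpha)}^*$ yields the right-hand side.

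No genuine obstacle arises; the content lies entirely in reading off the correct case of Theorem~\ref{thm:nonzero}. The only item that warrants care is checking that both expansions in (1) really do exclude cases~(1)--(3) and (5), which in each instance hinges on the first index of the block being precisely $i$ rather than merely larger than $j$.
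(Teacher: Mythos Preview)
Your proposal is correct and follows essentially the same route as the paper: for part~(1) the paper cites Lemma~\ref{lem:f1} (itself a direct consequence of Theorem~\ref{thm:nonzero}) to conclude $E_{(i,\alpha)}^*A_{(j,\beta)} = E_{(i,\alpha)}^*A_{(j,\beta)}E_{(i,\alpha)}^* = A_{(j,\beta)}E_{(i,\alpha)}^*$, while you invoke Theorem~\ref{thm:nonzero} directly; for part~(2) the paper simply declares the result trivial, and your chain of equalities via idempotency and part~(1) is exactly the intended content. One minor wording slip: in your final caveat, the second expansion has the \emph{third} index fixed at $(i,\alpha)$ rather than the first, but your actual case analysis is unaffected.
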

\begin{proof}
(1) By Lemma \ref{lem:f1}, $E_{(i,\alpha)}^*A_{(j,\beta)} = E_{(i,\alpha)}^*A_{(j,\beta)}E_{(i,\alpha)}^* = A_{(j,\beta)}E_{(i,\alpha)}^*$.

(2) It is trivial.
\end{proof}

Define $F_{(i,\alpha)(h,\xi)} = \frac{1}{p_h n_{(h, \xi)}} E_{(i,\alpha)}^*(\sum_{r=0} ^{(h-1, p_{h-1}-1)} A_r + \varepsilon^\xi A_{(h,1)}$ $+ \cdots +$ $\varepsilon^{(p_h - 1)\xi}$ $A_{(h,p_h - 1)})E_{(i,\alpha)}^*$, where $2 \leq i \leq d$, $1 \leq \alpha \leq p_i - 1$, $h \in [i-1]$, $1 \leq \xi \leq p_h - 1$ and $\varepsilon = e^{\frac{2 \pi i}{p_h}}$.

\begin{thm}\label{thm:nonprimarydimen}
Let $\mathcal{T}(x)$ be the Terwilliger algebra of $C_{p_1} \wr C_{p_2} \wr \cdots \wr C_{p_d}$.
\begin{enumerate}
\item[(1)] $\mathcal{U}$ is a $\mathbb{C}$-algebra isomorphic to $M_{1+ \Sigma_{i=1} ^d (p_i -1)}(\mathbb{C})$.
\item[(2)] $\mathcal{U}$ is an ideal of $\mathcal{T}(x)$ and $\mathcal{T}(x)/\mathcal{U}$ is commutative.
\item[(3)] The set $\{ F_{(i,\alpha)(h,\xi)} \mid 2 \leq i \leq d, 1 \leq \alpha \leq p_i - 1, h \in [i-1], 1 \leq \xi \leq p_h - 1 \}$ has $\Sigma_{1 \leq h < i \leq d} (p_h - 1)(p_i - 1)$ nonzero elements and each nonzero element is a central idempotent that spans a $1$-dimensional non-primary ideal of $\mathcal{T}(x)$.
\item[(4)] $\mathcal{T}(x) \simeq M_{1+ \Sigma_{i=1} ^d (p_i -1)}(\mathbb{C}) \oplus M_1(\mathbb{C})^{\oplus \Sigma_{1 \leq h < i \leq d} (p_h - 1)(p_i - 1)}$.
\end{enumerate}
\end{thm}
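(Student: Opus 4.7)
The plan is to establish the four claims in order, leveraging the matrix-unit calculus of Lemmas \ref{lem:f2}--\ref{lem:f5} together with the triple-product classification of Theorem \ref{thm:nonzero}.

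For (1), the relations $G_{(i,\alpha)(j,\beta)}G_{(l,\gamma)(m,\delta)} = \delta_{(j,\beta)(l,\gamma)}G_{(i,\alpha)(m,\delta)}$ of Lemma \ref{lem:f2}, combined with the linear independence of the $G$'s noted just before its statement, yield an algebra isomorphism $\mathcal{U}\cong M_{1+\sum_{i=1}^d(p_i-1)}(\mathbb{C})$ sending each $G$ to the corresponding matrix unit. For (2), $\mathcal{U}$ is a two-sided ideal because it is closed under multiplication by each generator of $\mathcal{T}(x)$: closure under $E^*_r$ is immediate from the form of the $G$'s, and closure under $A_r$ is exactly Lemma \ref{lem:f5}. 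For commutativity of $\mathcal{T}(x)/\mathcal{U}$, Theorem \ref{thm:triply} and Lemma \ref{lem:triplyregular} give $\mathcal{T}(x)=\mathcal{T}_0(x)$, so Theorem \ref{thm:nonzero} shows the algebra is spanned by the five families of triple products. Families (2), (3), (5) are scalar multiples of $G$'s; together with the identity $n_{(i,\alpha)}G_{(i,\alpha)(i,\alpha)} = \sum_{r\le(i-1,p_{i-1}-1)}E^*_{(i,\alpha)}A_r E^*_{(i,\alpha)}$ they imply that modulo $\mathcal{U}$ the algebra is spanned by the type-(4) products $E^*_{(i,\alpha)}A_{(j,\beta)}E^*_{(i,\alpha)}$. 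Two such products at distinct $(i,\alpha)$-indices multiply to zero; at the same $(i,\alpha)$-index they collapse via Lemma \ref{lem:f6}(2) to $E^*_{(i,\alpha)}A_{(j,\beta)}A_{(j',\beta')}E^*_{(i,\alpha)}$, which is symmetric in $(j,\beta)$ and $(j',\beta')$ by commutativity of the Bose--Mesner algebra.

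The main work lies in (3). The key observation is that, on the block $R_{(i,\alpha)}(x)$---which by Lemma \ref{lem:basic1} carries the induced scheme $C_{p_1}\wr\cdots\wr C_{p_{i-1}}$---the element $F_{(i,\alpha)(h,\xi)}$ factorises under the natural tensor decomposition $\mathbb{C}X_1\otimes\cdots\otimes\mathbb{C}X_{i-1}$ as
\begin{equation*}
 P_1\otimes\cdots\otimes P_{h-1}\;\otimes\;\mathcal{E}_\xi\;\otimes\;I_{p_{h+1}}\otimes\cdots\otimes I_{p_{i-1}},
\end{equation*}
where $P_k=\frac{1}{p_k}J_{p_k}$ is the rank-one all-ones projector on $\mathbb{C}X_k$ and $\mathcal{E}_\xi=\frac{1}{p_h}\sum_{\gamma=0}^{p_h-1}\varepsilon^{\gamma\xi}C_\gamma$ is the $\xi$-th primitive idempotent of the Bose--Mesner algebra of $C_{p_h}$ (with $C_\gamma$ the $\gamma$-th cyclic adjacency matrix). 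The normalisation $1/(p_h n_{(h,\xi)})$ is precisely what is needed to produce this tensor idempotent. From this description, each $F_{(i,\alpha)(h,\xi)}$ is a nonzero idempotent; distinct nonzero $F$'s are mutually orthogonal, since at a common $(i,\alpha)$-index one uses $\mathcal{E}_\xi\mathcal{E}_{\xi'}=\delta_{\xi\xi'}\mathcal{E}_\xi$ for common $h$ and $\mathcal{E}_\xi P_h = 0$ ($\xi\ne 0$) for distinct $h$, while at distinct $(i,\alpha)$-indices the outer $E^*$'s annihilate. For centrality, the vanishing of both the row and column sums of $\mathcal{E}_\xi$ (for $\xi\ne 0$) forces the all-ones block of every matrix-unit generator of $\mathcal{U}$ to kill the $F$-block from either side, so $F\cdot\mathcal{U}=\mathcal{U}\cdot F=0$. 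Since $\mathcal{T}(x)$ is semisimple and $\mathcal{U}$ is a simple ideal, Wedderburn theory provides a complementary ideal $\mathcal{V}$ containing $F$, and by (2) $\mathcal{V}\cong\mathcal{T}(x)/\mathcal{U}$ is commutative; hence each nonzero $F$ is central in $\mathcal{T}(x)$ and $\mathbb{C}F$ is a one-dimensional ideal.

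Part (4) then follows at once: $\mathcal{T}(x)=\mathcal{U}\oplus\mathcal{V}$ with $\mathcal{U}\cong M_{1+\sum_{i=1}^d(p_i-1)}(\mathbb{C})$ by (1), while (3) exhibits $\sum_{1\le h<i\le d}(p_h-1)(p_i-1)$ mutually orthogonal nonzero central idempotents in $\mathcal{V}$. These are linearly independent, and a dimension count of the nonzero triple products of Theorem \ref{thm:nonzero} against $\dim\mathcal{U}$ confirms that they exhaust $\mathcal{V}$, giving $\mathcal{V}\cong\mathbb{C}^{\sum_{1\le h<i\le d}(p_h-1)(p_i-1)}$. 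The principal obstacle is the tensor factorisation of $F$ in part (3); once that is in hand, centrality, orthogonality, and the dimension count all reduce to standard character-orthogonality identities for the cyclic schemes $C_{p_h}$.
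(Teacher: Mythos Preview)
Your argument is correct, and for parts (1), (2), and (4) it coincides with the paper's proof. The genuine divergence is in part (3). The paper establishes that $\mathbb{C}F_{(i,\alpha)(h,\xi)}$ is a one-dimensional ideal by a direct case analysis: it computes $A_{(j,\beta)}F_{(i,\alpha)(h,\xi)}$ and $F_{(i,\alpha)(h,\xi)}A_{(j,\beta)}$ in the four regimes $j<h$, $j=h$, $h<j<i$, $j\ge i$, using Lemma~\ref{lem:f6} and the vanishing of column sums, and finds in each case the same scalar multiple of $F_{(i,\alpha)(h,\xi)}$; this simultaneously yields centrality and the ideal property without appealing to Wedderburn theory or any dimension count. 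Your route instead identifies the $(i,\alpha)$-block of $F_{(i,\alpha)(h,\xi)}$ as the tensor idempotent $I\otimes\cdots\otimes I\otimes\mathcal{E}_\xi\otimes P_{h-1}\otimes\cdots\otimes P_1$ (note the paper's Kronecker convention puts the outer factor first, so your display is reversed but harmless), from which nonvanishing, idempotency, mutual orthogonality, and $F\mathcal{U}=\mathcal{U}F=0$ are immediate via character orthogonality; you then invoke semisimplicity to place $F$ in the commutative complement $\mathcal{V}$ and finish the primitivity claim with the dimension count in (4). Your approach is more structural and makes the role of the primitive idempotents of $C_{p_h}$ transparent, at the cost of deferring the ``$\mathbb{C}F$ is an ideal'' conclusion until after the dimension count; the paper's approach is more computational but self-contained within (3).
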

\begin{proof}
(1) For any $(i,\alpha), (j,\beta), (h,\xi) \in [\Sigma_{i=1} ^d (p_i -1)]$, clearly $E_{(h,\xi)}^* G_{(i,\alpha)(j,\beta)}$ = $\delta_{(h,\xi)(i,\alpha)}$ $G_{(i,\alpha)(j,\beta)} \in \mathcal{U}$ and $G_{(i,\alpha)(j,\beta)} E_{(h,\xi)}^* = \delta_{(j,\beta)(h,\xi)}G_{(i,\alpha)(j,\beta)} \in \mathcal{U}$. Also
$A_{(h,\xi)} G_{(i,\alpha)(j,\beta)}$ $\in \mathcal{U}$ and $G_{(i,\alpha)(j,\beta)} A_{(h,\xi)} \in \mathcal{U}$ by Lemma \ref{lem:f5}.
Thus, $\mathcal{U}$ is an ideal of $\mathcal{T}(x)$. For any $(i,\alpha), (j,\beta) \in [\Sigma_{i=1} ^d (p_i -1)]$, let $e_{(i,\alpha)(j,\beta)}$ be the $(1+ \Sigma_{i=1} ^d (p_i -1)) \times (1+ \Sigma_{i=1} ^d (p_i -1))$ matrix whose $((i,\alpha), (j,\beta))$-entry is $1$ and whose other entries are all zero.
Then the linear map $\varphi: \mathcal{U} \rightarrow M_{1+ \Sigma_{i=1} ^d (p_i -1)}(\mathbb{C})$ defined by
\[ \varphi(G_{(i,\alpha)(j,\beta)}) = e_{(i,\alpha)(j,\beta)} \]
is an isomorphism by Lemma \ref{lem:f2}.

(2) By Theorem \ref{thm:triply}, $C_{p_1} \wr C_{p_2} \wr \cdots \wr C_{p_d}$ is triply-regular, namely $\mathcal{T}_0 (x) = \mathcal{T}(x)$.

\[\mathcal{T}(x)/\mathcal{U}=\text{Span} \{ E_{(i,\alpha)}^* A_{(h,\xi)} E_{(i,\alpha)}^* + \mathcal{U}  \mid  (i,\alpha) \in [\Sigma_{i=1} ^d (p_i-1) ], h \in [i-1] , 1 \leq \xi \leq p_h - 1 \}\]

For any $(i,\alpha) \in [\Sigma_{i=1} ^d (p_i -1) ]$ and $h, g < i$,
$E_{(i,\alpha)}^* A_{(h,\xi)} E_{(i,\alpha)}^* E_{(i,\alpha)}^* A_{(g,\delta)} E_{(i,\alpha)}^*=
E_{(i,\alpha)}^* A_{(g,\delta)} E_{(i,\alpha)}^* E_{(i,\alpha)}^* A_{(h,\xi)} E_{(i,\alpha)}^*$ by Lemma \ref{lem:f6}(2).

Thus, $\mathcal{T}(x)/\mathcal{U}$ is commutative.

(3) We show that $(F_{(i,\alpha)(h,\xi)})^2 = F_{(i,\alpha)(h,\xi)}$ for all $2 \leq i \leq d, 1 \leq \alpha \leq p_i - 1, h \in [i-1], 1 \leq \xi \leq p_h - 1$.

By Lemma \ref{lem:f6}, $(F_{(i,\alpha)(h,\xi)})^2 = \frac{1}{(p_h n_{(h, \xi)})^2} E_{(i,\alpha)}^*$ $(\sum_{r=0} ^{(h-1, p_{h-1}-1)}$ $p_h n_{(h, \xi)}A_r + \varepsilon^\xi p_h n_{(h, \xi)}A_{(h,1)} + \cdots + \varepsilon^{(p_i - 1)\xi} p_h n_{(h, \xi)}A_{(h,p_h - 1)})E_{(i,\alpha)}^* = F_{(i,\alpha)(h,\xi)}$.

Now we show that each element in the set $\{ F_{(i,\alpha)(h,\xi)} \mid 2 \leq i \leq d, 1 \leq \alpha \leq p_i - 1, h \in [i-1], 1 \leq \xi \leq p_h - 1 \}$ is a central idempotent that spans a $1$-dimensional ideal of $\mathcal{T}(x)$.

If $j < i$ and $j > h$, then
$A_{(j,\beta)} F_{(i,\alpha)(h,\xi)} =  \frac{1}{p_h n_{(h, \xi)}} E_{(i,\alpha)}^*A_{(j,\beta)}(\sum_{r=0} ^{(h-1, p_{h-1}-1)} A_r + \varepsilon^\xi A_{(h,1)} + \cdots + \varepsilon^{(p_h - 1)\xi} A_{(h,p_h - 1)})E_{(i,\alpha)}^*$ by Lemma \ref{lem:f6}.
Since each column sum of $(\sum_{r=0} ^{(h-1, p_{h-1}-1)} A_r + \varepsilon^\xi A_{(h,1)} + \cdots + \varepsilon^{(p_h - 1)\xi} A_{(h,p_h - 1)})$ is zero, $A_{(j,\beta)} F_{(i,\alpha)(h,\xi)} = 0$.

If $j < i$ and $j = h$, then $A_{(j,\beta)} F_{(i,\alpha)(h,\xi)} =   \frac{1}{p_h n_{(h, \xi)}} E_{(i,\alpha)}^*A_{(j,\beta)}(\sum_{r=0} ^{(h-1, p_{h-1}-1)} A_r + \varepsilon^\xi A_{(h,1)} + \cdots + \varepsilon^{(p_h - 1)\xi} A_{(h,p_h - 1)})E_{(i,\alpha)}^* = \frac{1}{p_h n_{(h, \xi)}} E_{(i,\alpha)}^*(\sum_{r=0} ^{(h-1, p_{h-1}-1)} \varepsilon^{- \beta\xi} n_{(j, 1)}A_r + \varepsilon^\xi \varepsilon^{- \beta\xi} n_{(j, 1)}A_{(h,1)} + \cdots + \varepsilon^{(p_h - 1)\xi} \varepsilon^{- \beta\xi} n_{(j, 1)}A_{(h,p_h - 1)})E_{(i,\alpha)}^* = \varepsilon^{- \beta\xi} n_{(j, 1)} F_{(i, \alpha)(h,\xi)}$.

If $j < i$ and $j < h$, then $A_{(j,\beta)} F_{(i,\alpha)(h,\xi)} =  \frac{1}{p_h n_{(h, \xi)}} E_{(i,\alpha)}^*A_{(j,\beta)}(\sum_{r=0} ^{(h-1, p_{h-1}-1)} A_r + \varepsilon^\xi A_{(h,1)} + \cdots + \varepsilon^{(p_h - 1)\xi} A_{(h,p_h - 1)})E_{(i,\alpha)}^* = \frac{1}{p_h n_{(h, \xi)}} E_{(i,\alpha)}^*(\sum_{r=0} ^{(h-1, p_{h-1}-1)} n_{(j, 1)}A_r + \varepsilon^\xi n_{(j, 1)}A_{(h,1)} + \cdots + \varepsilon^{(p_h - 1)\xi} n_{(j, 1)}A_{(h,p_h - 1)})E_{(i,\alpha)}^* = n_{(j, 1)}F_{(i,\alpha)(h,\xi)}$.

If $j \geq i$, then $A_{(j,\beta)} F_{(i,\alpha)(h,\xi)}=0$ since each column sum of $(\sum_{r=0} ^{(h-1, p_{h-1}-1)} A_r + \varepsilon^\xi A_{(h,1)} + \cdots + \varepsilon^{(p_h - 1)\xi} A_{(h,p_h - 1)})$ is zero.

Similarly, we can check the case of $F_{(i,\alpha)(h,\xi)} A_{(j,\beta)}$.

Therefore,
\[A_{(j,\beta)} F_{(i,\alpha)(h,\xi)} = F_{(i,\alpha)(h,\xi)} A_{(j,\beta)} = \left\{
                    \begin{array}{ll}
                      0                         & \hbox{if $j < i$ and $j > h$}; \\
                      \varepsilon^{- \beta\xi} n_{(j, 1)} F_{(i, \alpha)(h,\xi)}   & \hbox{if $j < i$ and $j = h$}; \\
                      n_{(j, 1)} F_{(i,\alpha)(h,\xi)}                         & \hbox{if $j < i$ and $j < h$}; \\
                      0                                                   & \hbox{if $j \geq i$}.
                    \end{array}
                     \right. \]

(4) It is trivial.
\end{proof}

\begin{cor}\label{cor:dimp}
If $p_i = p$ for $1 \leq i \leq d$, then $\mathcal{T}(x) \simeq M_{1+ d(p-1)}(\mathbb{C}) \oplus M_1(\mathbb{C})^{\oplus \frac{(d-1)d}{2}(p-1)^2}$.
\end{cor}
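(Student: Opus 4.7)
The plan is to derive the corollary as an immediate specialization of Theorem~\ref{thm:nonprimarydimen}(4), which already gives the full structural decomposition
\[\mathcal{T}(x) \simeq M_{1+ \Sigma_{i=1} ^d (p_i -1)}(\mathbb{C}) \oplus M_1(\mathbb{C})^{\oplus \Sigma_{1 \leq h < i \leq d} (p_h - 1)(p_i - 1)}\]
for arbitrary parameters $p_1,\dots,p_d$. There is no further algebraic content to verify; only the two index sums need to be evaluated under the uniformity hypothesis $p_i=p$.

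First I would observe that the dimension of the ``large'' matrix block is
\[1 + \sum_{i=1}^{d}(p_i - 1) = 1 + \sum_{i=1}^{d}(p-1) = 1 + d(p-1),\]
so the first summand becomes $M_{1+d(p-1)}(\mathbb{C})$. Next I would count the number of one-dimensional non-primary ideals. Since every factor $(p_h-1)(p_i-1)$ collapses to $(p-1)^2$, and the number of ordered pairs $(h,i)$ with $1\leq h < i \leq d$ is $\binom{d}{2}=\frac{d(d-1)}{2}$, the multiplicity is
\[\sum_{1 \leq h < i \leq d}(p_h-1)(p_i-1) = \binom{d}{2}(p-1)^2 = \frac{d(d-1)}{2}(p-1)^2.\]
Substituting both values into the decomposition of Theorem~\ref{thm:nonprimarydimen}(4) yields the stated isomorphism.

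Since the result is purely a numerical specialization, there is no real obstacle; the only thing to be careful about is the bookkeeping of the index set in the outer sum (ordered pairs $h<i$ rather than unordered), which is handled by the binomial coefficient $\binom{d}{2}$.
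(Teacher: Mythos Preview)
Your proposal is correct and matches the paper's intent: the corollary is an immediate specialization of Theorem~\ref{thm:nonprimarydimen}(4), and the paper itself gives no proof beyond stating the result. Your evaluation of the two index sums under $p_i=p$ is exactly the required bookkeeping.
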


\begin{rem} \label{rem:1}
If $p=2$, then Corollary \ref{cor:dimp} coincides with the case that all of $n_i$ in Corollary 4.3 of \cite{song} are 2.
\end{rem}

\thanks{\textbf{Acknowledgements} This work was supported by the National Research Foundation of Korea Grant funded by the Korean Government[NRF-2010-355-C00002].}

\bibstyle{plain}

\end{document}